\newcommand{\one}[1]{\mbox {\bf 1}_{\{#1\}}}
\newcommand{\pp}{{\mathbb P}}
\newcommand{\smf}{{\mathfrak S}}
\newcommand{\wmf}{{\mathfrak W}}
\newcommand{\lmf}{{\mathbb L}}
\newcommand{\nn}{{\mathbb N}}
\newcommand{\ee}{{\mathbb E}}
\newcommand{\rr}{{\mathbb R}}
\newcommand{\cc}{{\mathbb C}}
\newcommand{\cala}{{\mathcal A}}
\newcommand{\cald}{{\mathcal D}}
\newcommand{\calg}{{\mathcal G}}
\newcommand{\cals}{{\mathcal S}}
\newcommand{\calb}{{\mathcal B}}
\newcommand{\calj}{{\mathcal J}}
\newcommand{\cali}{{\mathcal I}}
\newcommand{\calo}{{\mathcal O}}
\newcommand{\caly}{{\mathcal Y}}
\newcommand{\beq}{\begin{eqnarray*}}
	\newcommand{\feq}{\end{eqnarray*}}
\newcommand{\beqn}{\begin{eqnarray}}
	\newcommand{\feqn}{\end{eqnarray}}
\newtheorem{theorem}{Theorem}
\newtheorem*{conj*}{Conjecture}
\makeatletter \@addtoreset{theorem}{section}\makeatother
\newtheorem{lemma}[theorem]{Lemma}
\newtheorem*{theorema*}{Theorem~A}
\newtheorem*{theoremb*}{Theorem~B}
\newtheorem*{theoremc*}{Theorem~C}
\newtheorem*{theoremd*}{Theorem~D}
\newtheorem*{theoreme*}{Theorem~E}
\newtheorem*{theoremf*}{Theorem~F}
\newtheorem*{cld*}{Condition $\mbox{LD}_d$}
\newtheorem*{theorem*}{Theorem}
\newtheorem{proposition}[theorem]{Proposition}
\newtheorem{corollary}[theorem]{Corollary}
\newtheorem{conjecture}[theorem]{Conjecture}
\newtheorem{problem}[theorem]{Problem}
\def\BState{\State\hskip-\ALG@thistlm}
\DeclarePairedDelimiter\ceil{\lceil}{\rceil}
\DeclarePairedDelimiter\floor{\lfloor}{\rfloor}
\newlength\myindent
\DeclareMathOperator{\con}{\mbox{con}}
\DeclareMathOperator{\red}{\mbox{red}}
\newcommand*\pFqskip{8mu}
\newcommand*\pFq{\begingroup
	\catcode`\,\active
	\def ,{\mskip\pFqskip\relax}%
	\dopFq
}
\def\dopFq#1#2#3#4#5{%
	{}_{#1}{\mathscr F}_{#2}\biggl[\genfrac..{0pt}{}{#3}{#4};#5\biggr]%
	\endgroup
}
\DeclareMathOperator{\gammaf}{\Gamma}
\DeclareMathOperator{\malp}{\mathtt{K}}
\newcommand{\alp}[1]{[\malp]^{#1}}
\DeclareMathOperator{\LL}{\mathtt{A}}
\DeclareMathOperator{\HH}{\mathtt{H}}
\title{Consecutive Pattern Containment and c-Wilf Equivalence}
\author{Reza~Rastegar\thanks{Center of Excellence for Data Science and Modeling, Occidental Petroleum Corporation, Houston, TX 77046 e-mail:   reza.j.rastegar@gmail.com} }
\begin{document}
	\maketitle
	\begin{abstract}
		We offer elementary proofs for several results in consecutive pattern containment that were previously demonstrated using ideas from cluster method and analytical combinatorics. Furthermore, we establish new general bounds on the growth rates of consecutive pattern avoidance in permutations.
	\end{abstract}
	{\em MSC2010: } Primary~05A05, 05A16, 05A15.\\
	\noindent{\em Keywords}: consecutive pattern containment, c-Wilf-equivalence, asymptotic growth rate, words, permutations, probabilistic method
	\section{Introduction}
	
	Consecutive pattern containment is a variation of the standard definition of pattern containment in permutations. In this containment, the entries of a pattern are required to appear in adjacent positions. This problem is significant in combinatorics, as it naturally arises in descents, peaks, and alternating permutations. The first systematic analysis of consecutive pattern containment is credited to Elizalde and Noy \cite{eliz8}, who defined it formally and established several fundamental results. For more information on the broader subject of pattern containment, we refer the reader to \cite{Bbook} and \cite{Kbook}. Additionally, for a more focused review of consecutive pattern containment, see \cite{eli2}. In this paper, we revisit consecutive pattern containment in both permutations and words, and we present several new and previously known facts. Our proofs are based on elementary probabilistic arguments.\par

	We begin by establishing some definitions. Let $\nn$ denote the set of natural numbers $\{1, 2, 3, \ldots\}$, and let $\nn_0$ denote the set of non-negative integers, i.e., $\nn_0 = \nn \cup \{0\}$. Given a set $\cala$, $\#\cala$ refers to the cardinality of $\cala$. The notation $\widetilde\cala$ is used to denote the complement of $\cala$ with respect to the universe set.  For a given $k \in \nn$, we denote the set $\{1, 2, \cdots, k\}$ by $[k]$, and the set of all $d$-subsets by $[k]_d$.  Denote by $\cals_n$ the set of all permutations of the set $[n]$. A word $w$ is a sequence $w := w_1\cdots w_n$, where $w_i$ represents the $i$-th entry of $w$, selected from an arbitrary set of letters referred to as the alphabet. Throughout this paper, we use $[\malp]$ as the {\it{generic}} alphabet set, where $\malp\in \nn$ is its size. We adopt the convention that $\alp 0=\{\epsilon\},$ where $\epsilon$ is an empty word. A \emph{$\malp$-ary word} of length $n$ is an element of $\alp n,$ $n\in\nn.$ A \emph{pattern} with $d$
	distinct letters is any word chosen from the alphabet set $[d]$. 
	
	Let us define an occurrence of the pattern $v$ in an arbitrary word $w$ of length $n \geq d$. Such an occurrence is represented by a sequence of $d$ indices $1 \leq j_1 < j_2 < \cdots < j_d \leq n$ such that the subsequence $w_{j_1}\cdots w_{j_d}$ is order-isomorphic to the pattern $v$. In other words,
	\beq
	w_{j_p} < w_{j_q} \iff v_p < v_q \quad \text{and} \quad w_{j_p} = w_{j_q} \iff v_p = v_q, \quad \forall\  1 \leq p,q \leq d.
	\feq
	Here, we say  $w_{j_1}\cdots w_{j_d}$ reduces to $v$ and write $\red(w_{j_1}\cdots w_{j_d})=v$. If $j_{p+1}= j_p+1$ for all $p\in [d-1]$, then we call the subsequence $w_{j_1}\cdots w_{j_d}$ a
	\emph{consecutive} occurrence of the pattern $v$. 
	For any arbitrary word $w$ we denote by $\con_v(w)$ the number of consecutive occurrences of $v$. For instance, if $v$ is the \emph{inversion} $21$ and $w=35239$, then there are three occurrences and one consecutive occurrence, those are $w_1w_3=32,$ $w_2w_3=52,$ and $w_2w_4=53$, and consequently $\con_v(w)=1$. We say that a word $w$ \emph{contains the pattern $v$ consecutively} exactly $r$ times, $r\in\nn_0,$ if $\con_v(w)=r$. For a set of words $\cala$ and $r\in \nn_0,$ we denote by $\calg_r^v(\cala)$ the set of words in $\cala$ containing $v$ consecutively exactly $r$ times. That is,
	\beqn
	\label{gfr}
	\calg_r^v(\cala)=\{w\in \cala:\con_v(w)=r\}.
	\feqn
	We use $g_r^v(\cala)$ to refer to $\#\calg_r^v( \cala)$. In the case of $r=0,$ we use the term {\it avoidance} instead of {\it occurrence}. 
	
	An important area of study in pattern containment is to determine when two patterns are avoided by the same
	number of permutations or words of the same length, or more generally when the distribution of occurrences of the two patterns is the same. We say that two patterns $v$ and $w$ are \emph{c-Wilf-equivalent in words} if $g_0^v(\alp n) = g_0^w(\alp n)$ for all
	$\malp ,n\in \nn$. Moreover, they are \emph{strongly c-Wilf-equivalent} if $g_r^v(\alp n) = g_r^w(\alp n)$ for all $\malp, n\in
	\nn$ and $r\in \nn_0$. Similarly, $v$ and $w$ are \emph{c-Wilf-equivalent in permutations} if $g_0^v(\cals_n) =
	g_0^w(\cals_n)$ for all $n\in \nn$, and are \emph{strongly c-Wilf-equivalent} if $g_r^v(\cals_n) = g_r^w(\cals_n)$ for
	all $n \in \nn$ and $r\in \nn_0$. For example, every permutation pattern $v$ is strongly c-Wilf-equivalent to its
	reversal both in words and permutations. An important open problem in consecutive patterns (whose analogue in the
	classical pattern avoidance is wide open as well) is
	
	\begin{problem} \label{prob1}
		Classify all patterns into (strongly) c-Wilf equivalence classes in permutations (in words).      
	\end{problem}
	
	In contrast to classical patterns, Nakamuta conjectured that the word ``strongly" in the above definition is unnecessary in the context of permutations (see \cite{nak1}, Conjecture 5.6). Specifically, he proposed that
	
	\begin{conjecture}[Nakamuta\rq{s} conjecture]\label{conj_nakamura}
		If two patterns are c-Wilf-equivalent in permutations, then they are also strongly c-Wilf-equivalent in permutations.    \end{conjecture}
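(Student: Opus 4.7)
My plan is to recast the conjecture as a distributional statement about the random variable $X_v^{(n)} := \con_v(\pi)$ where $\pi$ is drawn uniformly from $\cals_n$: c-Wilf equivalence in permutations reads $\PP(X_v^{(n)}=0)=\PP(X_w^{(n)}=0)$ for every $n$, while strong c-Wilf equivalence demands that the laws of $X_v^{(n)}$ and $X_w^{(n)}$ coincide for every $n$. The goal is to amplify agreement at the single atom $0$ into agreement of the full distribution.

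The first step is to introduce the factorial--moment counts
\[
M_k^v(n) := \#\{(\pi, i_1 < \cdots < i_k) : \pi \in \cals_n,\ \red(\pi_{i_j}\cdots\pi_{i_j+d-1}) = v \text{ for each } j\},
\]
which are related to the raw counts $g_r^v(\cals_n)$ by an invertible triangular change of basis; indeed, inclusion--exclusion gives $g_0^v(\cals_n) = \sum_{k\geq 0} (-1)^k M_k^v(n)$, so c-Wilf equivalence reads $\sum_k (-1)^k (M_k^v(n)-M_k^w(n)) = 0$ for every $n$, whereas strong c-Wilf equivalence is the much stronger system $M_k^v(n) = M_k^w(n)$ for all $n$ and $k$. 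I would then split $M_k^v(n)$ by the ``shape'' of the tuple $(i_1,\dots,i_k)$: runs of indices with consecutive gaps strictly less than $d$ contribute a factor determined by the self-overlap structure of $v$, while widely separated runs contribute independently by a factorial symmetry argument on $\cals_n$. This probabilistic cluster decomposition expresses the bivariate exponential generating function $F_v(x,t) := \sum_n \sum_{\pi\in\cals_n} t^{\con_v(\pi)} x^n/n!$ as a rational function of a cluster polynomial $C_v(x,t)$, under which c-Wilf equivalence becomes $C_v(x,0) = C_w(x,0)$ and strong c-Wilf equivalence becomes the full bivariate identity $C_v(x,t) = C_w(x,t)$.

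The main obstacle, which is the analytic heart of Nakamuta's conjecture, is exactly the gap between these two identities: c-Wilf equivalence supplies one univariate relation per $n$ among the cluster weights, while strong equivalence needs the full bivariate identity. My attack would proceed in three stages. First, I would dispose of patterns with no nontrivial self-overlap, where $C_v(x,t)$ collapses to an essentially monomial expression and the implication is immediate. Second, I would compute the second factorial moment $M_2^v(n)$ explicitly in terms of the autocorrelation polynomial of $v$ and attempt to force, from the hypothesis $g_0^v = g_0^w$, the refinement $M_2^v(n) = M_2^w(n)$; if this step succeeds, the self-overlap profiles of $v$ and $w$ must coincide, which I expect to propagate to equality of all higher $M_k$. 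Third, I would attempt a bijective lifting that extends any hypothetical bijection $\calg_0^v(\cals_n) \to \calg_0^w(\cals_n)$ equivariantly to the finer stratification by $\con_v$. I expect the second stage to be decisive: extracting the autocorrelation polynomial from a single alternating-sum identity is precisely where the probabilistic information is thinnest, and if this step fails then the conjecture may well require a genuinely new combinatorial invariant beyond the cluster framework.
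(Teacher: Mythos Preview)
The statement you are attempting to prove is not a theorem in the paper: it is Nakamura's conjecture, explicitly flagged as open (``Although these problems remain unsolved, partial progress has been made\ldots''). The paper does not contain a proof of it. What the paper does prove is the special case of non-overlapping patterns (Proposition~\ref{non-overlapping-wilf-thm}), where the cluster structure is trivial enough that the avoidance data $g_0^v(\cals_n)$ determines all the overlap constants $\LL_k(v)$, and hence all the higher counts $g_r^v(\cals_n)$.

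Your proposal is not a proof but a programme, and you already identify the gap yourself. Stages one and three are either known or vague: the non-overlapping case (your stage one) is exactly the special case the paper handles, and stage three (``bijective lifting'') is a wish rather than an argument. The decisive step is your stage two, and there you only say you would ``attempt to force'' $M_2^v(n)=M_2^w(n)$ from $g_0^v=g_0^w$. This is precisely the open content of the conjecture. The cluster framework you describe is the standard one (Goulden--Jackson, as used by Nakamura and Elizalde--Noy), and within it c-Wilf equivalence gives $C_v(x,-1)=C_w(x,-1)$ while strong c-Wilf equivalence requires $C_v(x,t)=C_w(x,t)$; passing from the specialization $t=-1$ to the full bivariate identity is exactly the missing step, and nothing in your outline explains how to bridge it. In particular, there is no known mechanism by which equality of the alternating sums $\sum_k(-1)^k M_k^v(n)$ forces equality of the autocorrelation polynomials when $v$ has nontrivial overlaps, and your proposal does not supply one.
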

	
	Although these problems remains unsolved, partial progress has been made towards potential solutions. To describe the progress and state the results, we first define an essential notion of overlapping concerning the structure of a
	pattern. We denote by $\calo_v$ the set of overlaps of $v$, which we define as the set of indices $i$ with $1\leq i < d$
	such that the first and the last $i$ positions have the same relative order; that is, $\red(v_{1},\cdots, v_i ) =
	\red(v_{d-i+1}, \cdots, v_{d})$. 
	Note that $1 \in \calo_v$ for every pattern $v\in \cals_d$. If $d\geq 3$, a pattern $v \in \cals_d$
	for which $\calo_v = \{1\}$ is said to be non-overlapping, also known as minimally overlapping. Equivalently, $v$ is
	non-overlapping if two occurrences of $v$ in a permutation cannot overlap in more than one position. Duane and Remmel \cite{dua1} and B\'ona \cite{bona3} studied non-overlapping patterns, and B\'ona proved that the proportion of non-overlapping patterns of any length $d$ is at least $0.364$. Additionally, it is straightforward to show that for $d\geq 3,$ the pattern $v$ is monotone, that is $v\in \{1\cdots d, d\cdots 1\}$, if and only if $\calo_v=[d]$, if and only if $d-1\in \calo_v$; see, for
	instance, Lemma 9, \cite{pera1}. Hence, we can safely consider that the maximum overlap of a non-monotone pattern $v$
	is at most $d-2$.

	A sufficient condition for strong c-Wilf-equivalence in permutations for two paterns of the same overlap set was given
	independently by Khoroshkin and Shapiro \cite{khor1}, and Nakamura \cite{nak1}. They showed for any two patterns
	$v,w\in \cals_d$, 
	\beqn \label{khor-cond}
	&& \mbox{If $\calo_v = \calo_w$ and if for all $i\in \calo_v,$ we have} \notag \\
	&& \qquad \qquad \{v_1,\cdots, v_{d-i}\} = \{w_1,\cdots, w_{d-i}\} \quad \mbox{and} \quad \{v_{i+1},\cdots, v_{d}\} = \{w_{i+1},\cdots,
	w_{d}\},
	\feqn
	then $v$ and $w$ are strongly c-Wilf-equivalent in permutations. The techniques used to prove this result were based on the Goulden-Jackson cluster method (see \cite{nak1}) and a homological duality argument applied to specific clusters (see \cite{khor1}). Our first result below provides a similar result
	that applies to words. Our proof is based on classical findings related to the observation of a predetermined sequence within an i.i.d. sequence of discrete experiments. It is more elementary than previous proofs and leads to a stronger conclusion, which can be extended to permutations using Corollary \ref{word-perm-cor}. In fact, our approach can establish a form of equivalence known as super-strong Wilf-equivalence, as defined in \cite{dwyer1}.

	\begin{theorem} \label{words_thm}
		Let $v$ and $w$ be two patterns in $\cals_d.$ Then, $v$ and $w$ are (strongly) c-Wilf-equivalent in words if \eqref{khor-cond} holds.
	\end{theorem}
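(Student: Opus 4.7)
The plan is to construct, for every subset $S\subseteq[n-d+1]$, an explicit bijection $\phi_S$ from
\[
\calg^v_S:=\bigl\{u\in\alp n:v\text{ occurs consecutively at each position in }S\bigr\}
\]
to the analogously defined $\calg^w_S$. Granting this, $|\calg^v_S|=|\calg^w_S|$ for every $S$, so M\"obius inversion on the subset lattice of $[n-d+1]$ yields $g^v_r(\alp n)=g^w_r(\alp n)$ for every $r\in\nn_0$, which is strong c-Wilf-equivalence in words.

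To define $\phi_S$, I partition $S$ into maximal \emph{clusters}: subsets $C=\{j_1<\cdots<j_r\}\subseteq S$ whose consecutive elements differ by less than $d$. Distinct clusters are separated by gaps of at least $d$, so their spans $[j_1,j_r+d-1]$ are pairwise disjoint. At each $j_p\in C$ the letters $u_{j_p},\ldots,u_{j_p+d-1}$ are $d$ distinct values $a^{(p)}_1<\cdots<a^{(p)}_d$ in $[\malp]$ with $u_{j_p+q-1}=a^{(p)}_{v_q}$; I set
\[
\phi_S(u)_{j_p+q-1}:=a^{(p)}_{w_q}\qquad(q=1,\ldots,d),
\]
and leave $u$ unchanged outside all cluster spans. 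Since each window rooted at $j_p$ in $\phi_S(u)$ uses the same $d$ letters $a^{(p)}_1<\cdots<a^{(p)}_d$ but arranged in the order prescribed by $w$, we have $\phi_S(u)\in\calg^w_S$ as soon as the definition is internally consistent on overlapping windows.

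The main obstacle is this consistency check. If position $\ell$ lies in two windows at $j_p<j_{p'}$ with gap $k=j_{p'}-j_p<d$, the joint $v$-occurrence forces $d-k\in\calo_v$ together with the identities $a^{(p)}_{v_{k+m}}=a^{(p')}_{v_m}$ for $m=1,\ldots,d-k$; the analogous $w$-identities $a^{(p)}_{w_{k+m}}=a^{(p')}_{w_m}$ must hold for the two candidate values of $\phi_S(u)_\ell$ to agree. The key lemma is that under \eqref{khor-cond} the pairings $v_m\leftrightarrow v_{k+m}$ and $w_m\leftrightarrow w_{k+m}$ coincide: both are the unique order-preserving bijection between the common sets $\{v_1,\ldots,v_{d-k}\}=\{w_1,\ldots,w_{d-k}\}$ and $\{v_{k+1},\ldots,v_d\}=\{w_{k+1},\ldots,w_d\}$, where order-preservation follows from $d-k\in\calo_v=\calo_w$. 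Hence the $v$-identities and the $w$-identities are the very same equations, so $\phi_S$ is well-defined on pairwise overlaps; transitivity then handles positions lying in three or more windows. The map $\phi_S$ is a bijection because the construction is symmetric in $v$ and $w$, so swapping their roles produces the inverse. This completes the proof modulo the consistency lemma, which is precisely where condition \eqref{khor-cond} is consumed.
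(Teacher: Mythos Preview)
Your argument is correct, and it is a genuinely different route from the paper's. The paper proves Theorem~\ref{words_thm} by invoking the classical probability framework for sequence-pattern occurrence in i.i.d.\ trials (Li, Gerber--Li, Zajkowski): it forms the instance set $\cali_v(\malp)$, builds the $\alpha$-correlation matrix $R(\alpha,v,\malp)$ and its variants, expresses the generating function $\wmf^v_{r,\malp}$ in terms of $\ee(\alpha^{T_v^{(r)}})$, and then observes that condition~\eqref{khor-cond} forces $R(\alpha,v,\malp)=R(\alpha,w,\malp)$ (and likewise for the modified matrices), whence the generating functions coincide. Your proof is instead purely bijective: for each $S\subseteq[n-d+1]$ you build a value-set-preserving relabelling $\phi_S$ on each cluster span and verify compatibility on overlaps via the key observation that, for $i=d-k\in\calo_v=\calo_w$, condition~\eqref{khor-cond} (after passing to complements) gives $\{v_1,\dots,v_{d-k}\}=\{w_1,\dots,w_{d-k}\}$ and $\{v_{k+1},\dots,v_d\}=\{w_{k+1},\dots,w_d\}$, and that both $v_m\mapsto v_{k+m}$ and $w_m\mapsto w_{k+m}$ realise the \emph{unique} order-preserving bijection between these two sets. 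This is clean and entirely elementary; it is in spirit the cluster-method argument the paper attributes to Nakamura and Khoroshkin--Shapiro for permutations, transported to words. What the paper's approach buys is a direct link to generating functions and the existing hitting-time machinery, together with the super-strong equivalence the authors allude to; what your approach buys is a self-contained bijection requiring no external theory, and in fact your $\phi_S$ already witnesses super-strong equivalence at the level of marked-position sets. One small expository point: you might make explicit that the set equalities you use for the overlap of size $d-k$ come from~\eqref{khor-cond} with $i=d-k$ \emph{after taking complements in $[d]$}, since the condition as stated gives the first $k$ and last $k$ positions rather than the first $d-k$ and last $d-k$.
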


	In the case of non-overlapping patterns $v$ and $w$, the condition \eqref{khor-cond} simply states that if $v_1=w_1$ and $v_d=w_d$, then $v$ and $w$ are strongly c-Wilf-equivalent in permutations. This fact has been previously shown in \cite{dot1, eliz4, dua1}. A converse of this statement for non-overlapping patterns was conjectured by Elizalde in \cite{eliz4} and proved by Lee and Sah \cite{lee1}. To formally state the result, we define a pattern $v\in \cals_d$ to be in standard form if $v_1<v_d$ and $v_1+v_d\leq d+1$. It is worth noting that for any $v\in\cals_d$, at least one permutation among $v$, its reverse, complement, and reversed-complement is in standard form. The result of Lee and Sah states that for any two standard form non-overlapping patterns $v,w\in \cals_d$, if $v$ and $w$ are c-Wild-equivalent in permutations, then $v_1=w_1$ and
	$v_d=w_d$.
	
	The condition $v_1=w_1$ and $v_d=w_d$ is clearly sufficient for non-overlapping patterns $v,w\in \cals_d$ to be strongly c-Wilf-equivalent. Therefore, the result of Lee and Sah completely characterizes (strong) c-Wilf-equivalence classes in permutations for non-overlapping patterns. See Proposition~\ref{non-overlapping-wilf-thm} below. Although this result is only applicable to non-overlapping patterns, Dwyer and Elizalde formulated another statement without this restriction and showed that it is equivalent to the theorem of Lee and Sah. More details on this can be found in \cite{dwyer1}. Their more general statement is that for any patterns $v,w\in \cals_d$ in standard form, if $v$ and $w$ are c-Wilf-equivalent, then $v_1=w_1$ and $v_d=w_d$. We remark that the inverse of this statement does not hold as one can find counterexamples. More specifically there are known examples of patterns $v$  and $w$ satisfying $v_1=w_1$ and $v_d=w_d$ that are not c-Wilf-equivalent in permutations. See \cite{dwyer1} for more information and several examples.

	Below, we present several straightforward observations that we think could help in analyzing consecutive pattern containment for non-overlapping patterns. To explain these findings, we define
	
	\beqn \label{a_h_def}
	a^{(r+1)}_n(v):=\frac{g^v_{r}(\cals_{n+d-1})}{(n+d-1)!} \quad \mbox{and} \quad h^{(r+1)}_{\malp,n}(v):=\frac{g^v_{r}(\alp{n+d-1})}{\malp^{n+d-1}}.
	\feqn
	
	The first observation establishes a linear recursive equation for $a^{(r)}_n(v)$ and $h^{(r)}_{\malp,n}(v)$. The arguments are based on repetitive exclusion-inclusion of probability events.
	
	\begin{theorem} \label{non-overlapping-bound-thm}
		\begin{itemize}
			\item[(a)] Let $v\in \cals_d$ be a non-overlapping  pattern. For a given $n \geq d,$ set $m:=\floor{\frac{n-1}{d-1}}$. Then
			\beqn \label{qqq1}
			a_n^{(1)}(v) - a_{n-1}^{(1)}(v) - \frac{1}{d!} a^{(1)}_{n-d}(v) - \sum_{j=1}^{m} (-1)^{j} \LL_j(v) a^{(1)}_{n-jd+j-d}(v) = 0,
			\feqn
			where $\LL_j(v)$s are constants defined by \eqref{Ln_value}. 
			\item[(b)] Fix $\malp \in \nn$. Let $v\in \cals_d$ be any non-overlapping permutation pattern with $v_1<v_d$. Then, there is $n_0\in \nn$ such that for any $n\geq n_0$ 
			\beqn \label{www1}
			h_{\malp,n}^{(1)}(v) - h_{\malp,n-1}^{(1)}(v) - \frac{1}{\malp^d} h^{(1)}_{\malp,n-d}(v) - \sum_{j=1}^{\malp} (-1)^{j} \HH_{\malp, j}(v) h^{(1)}_{\malp, n-jd+j-d}(v) = 0,
			\feqn
			where $\HH_{\malp, j}(v)$ is defined by \eqref{Ln_value_word}.
		\end{itemize}
	\end{theorem}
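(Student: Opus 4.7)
The plan is to derive both recurrences by a single inclusion-exclusion applied to the events that individual length-$d$ windows realize a consecutive occurrence of $v$. Probabilistically, $a_n^{(1)}(v)$ is the probability that a uniformly random $\sigma\in\cals_{n+d-1}$ avoids $v$ consecutively, and $h_{\malp,n}^{(1)}(v)$ is the analogous probability for an i.i.d.\ uniform word of length $n+d-1$ over $[\malp]$. In either model, let $A_i$ denote the event that the length-$d$ window starting at position $i$ is an occurrence of $v$, for $i=1,\dots,n$. The starting identity is
\[
a_n^{(1)}(v) \;=\; a_{n-1}^{(1)}(v) \;-\; \PP\Bigl(A_n \cap \bigcap_{i<n}A_i^{c}\Bigr),
\]
which follows by splitting on whether the last window matches $v$ and by noting that projecting a uniform length-$(n+d-1)$ permutation onto its first $n+d-2$ positions yields a uniform $\cals_{n+d-2}$ (so restricting the intersection to $i<n$ produces exactly $a_{n-1}^{(1)}(v)$); the word case is analogous.

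For part (a), the non-overlapping hypothesis forces that on $A_n$, every $A_i$ with $n-d+2\le i\le n-1$ fails automatically, because such a simultaneous occurrence would require an overlap index $k\in\{2,\dots,d-1\}$ to lie in $\calo_v=\{1\}$. Hence the correction equals $\PP(A_n\cap A_{n-d+1}^c\cap\bigcap_{i\le n-d}A_i^c)$. Expanding $A_{n-d+1}^c$ and iterating the same reasoning (each round introduces the next admissible leftward extension at window-index $n-s(d-1)$) produces a telescoping alternating sum $\sum_{s\ge 1}(-1)^{s-1}V_s$, where $V_s$ is the joint probability that the $s$ windows at positions $n-(s-1)(d-1),\dots,n$ all match $v$ while $A_i^c$ holds for every $i\le n-s(d-1)-1$. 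Since the resulting cluster occupies positions $n-(s-1)(d-1),\dots,n+d-1$, disjoint from those touched by the tail constraint, and since uniform permutations factor across disjoint position blocks,
\[
V_s \;=\; \frac{c_s(v)}{(s(d-1)+1)!}\; a_{n-s(d-1)-1}^{(1)}(v),
\]
where $c_s(v)$ counts the orderings of $s(d-1)+1$ symbols realizing $s$ chained matches of $v$. This isolates $1/d!$ as the $s=1$ cluster probability and pins down each $\LL_j(v)$ as $c_{j+1}(v)/((j+1)(d-1)+1)!$ (up to the sign convention of \eqref{Ln_value}); the sum truncates at $s=m+1$, since a length-$(m+2)$ cluster would need leftmost window-index $n-(m+1)(d-1)<1$.

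For part (b) the argument is formally identical---iterated inclusion-exclusion, reduction to a single ``last'' cluster, and telescoping carry over verbatim---and independence across disjoint position blocks is now trivial since the letters are i.i.d. The only genuine difference is a ceiling on the cluster size: assuming $v_1<v_d$, at each shared corner of a chained occurrence the letter has rank $v_d$ in one window and rank $v_1$ in the next, so the corner letters along a length-$s$ cluster are strictly increasing, forcing $s+1$ distinct letters from $[\malp]$ and hence $s\le\malp-1$. This is why the alternating sum has at most $\malp$ nontrivial terms; the $n\ge n_0$ qualification is simply the threshold above which every subscript $n-jd+j-d$ is non-negative for $j=1,\dots,\malp$.

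The substantive content is identifying the telescoping coefficients with the explicitly-defined $\LL_j(v)$ in \eqref{Ln_value} and $\HH_{\malp,j}(v)$ in \eqref{Ln_value_word}, together with keeping the alternating signs in the inclusion-exclusion aligned with the signs in the target recurrence. The compatibility bookkeeping (which $A_i$ are annihilated by $A_n$, which extend the current cluster, which lie on disjoint positions) is routine but error-prone, and this sign/index accounting is the main obstacle I anticipate---there is no conceptual difficulty beyond it.
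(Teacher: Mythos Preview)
Your proposal is correct and follows essentially the same approach as the paper: start from $a_n=a_{n-1}-(\text{correction})$, use the non-overlapping hypothesis to kill intermediate windows, iterate inclusion--exclusion to build a chain of adjacent occurrences, and then factor each term as (cluster probability)$\times$(avoidance probability on the disjoint tail). The paper expands from the left (peeling off $\widetilde\calb_1$, then $\widetilde\calb_d$, etc.) while you expand from the right, but by stationarity this is the same computation; your independence claim ``uniform permutations factor across disjoint position blocks'' is exactly what the paper obtains from its i.i.d.\ uniform-$[0,1]$ model, and your bound $s\le\malp-1$ on the cluster length in part~(b) is precisely the paper's observation that the corner values $x_0<\cdots<x_{k+1}$ must be distinct elements of~$[\malp]$.

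The one place where the paper does more than you is the explicit enumeration: after identifying $\LL_k(v)$ (respectively $\HH_{\malp,k}(v)$) as the probability of a $(k{+}1)$-cluster, the paper parametrizes such clusters by the corner values $x_0<\cdots<x_{k+1}$ and counts the choices in each block to obtain the closed forms \eqref{Ln_value} and \eqref{Ln_value_word}. You flag this identification as ``the substantive content'' but do not carry it out; since the theorem as stated asserts the recurrence with $\LL_j(v)$ \emph{defined} by \eqref{Ln_value}, you would still need to verify that your $c_{j+1}(v)/((j{+}1)(d{-}1)+1)!$ equals that explicit sum. This is a straightforward (if somewhat tedious) counting argument, not a conceptual gap.
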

	
	The proof of the first part is given in Section \ref{perm_linear}. As for the second part, the proof is very similar to the proof of the first part and we sketch it in the beginning of Section \ref{word_sec}.
	
	As a consequence of this result, we present the following proposition that had been previously established for permutations by Elizalde and Noy \cite{eliz9}, utilizing alternative techniques such as cluster method and analytical combinatorics. In contrast, our proof relies on elementary probability arguments and is presented for permutations in Section \ref{perm_linear}.
	
	\begin{proposition} \label{non-overlapping-wilf-thm} 
		Suppose $w,v\in \cals_d$ are two non-overlapping patterns that are c-Wilf-equivalent in permutations (resp. words). Then, they are also strongly c-Wilf-equivalent in permutations (resp. words). 
	\end{proposition}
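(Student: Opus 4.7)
The plan is to use Theorem \ref{non-overlapping-bound-thm} as the engine. For permutations, c-Wilf-equivalence of $v,w\in\cals_d$ is the statement $a_n^{(1)}(v)=a_n^{(1)}(w)$ for every $n$, while strong c-Wilf-equivalence is $a_n^{(r+1)}(v)=a_n^{(r+1)}(w)$ for every $n$ and every $r\ge 0$. My first step is to show that c-Wilf-equivalence pins down the recurrence coefficients. Writing \eqref{qqq1} for $v$ and for $w$, subtracting, and setting $a_n^{(1)}(v)=a_n^{(1)}(w)=:a_n$, one gets
\[
\sum_{j=1}^{m}(-1)^j\bigl[\LL_j(v)-\LL_j(w)\bigr]\,a_{n-jd+j-d}=0
\]
for every admissible $n$. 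Starting at the smallest $n$ for which $m=1$, the positivity of the avoidance counts forces $\LL_1(v)=\LL_1(w)$; induction on $m$ then peels off $\LL_j(v)=\LL_j(w)$ for every $j\ge 1$.

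The second step is to extend the exclusion-inclusion derivation behind Theorem \ref{non-overlapping-bound-thm}(a) to produce, for every $r\ge 0$, an analogous linear recurrence for $a_n^{(r+1)}(v)$ whose coefficients are the same $\LL_j(v)$ (or explicit expressions in them). For a non-overlapping pattern, any family of forced consecutive occurrences either shares at most a single position or is cleanly separated, so the local counts that drive the exclusion-inclusion decompose into pieces that are already accounted for by $\LL_j(v)$ in the $r=0$ case. With the coefficients matched between $v$ and $w$ by Step 1, equality of the $a_n^{(r+1)}$-sequences reduces to equality of finitely many initial conditions; these correspond to small $n$ values of $g_r^v(\cals_n)$, computable by direct enumeration and depending only on data shared by $v$ and $w$ under the non-overlap and c-Wilf hypotheses.

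For the word case I would follow the same template via Theorem \ref{non-overlapping-bound-thm}(b): read off $\HH_{\malp,j}(v)=\HH_{\malp,j}(w)$ from c-Wilf-equivalence in words, extend the recurrence to $r\ge 1$ with the same coefficients, and match initial conditions; the restriction $n\ge n_0$ is benign because small-$n$ values can be compared directly. The main obstacle will be Step 2, namely, setting up the extension cleanly enough that no genuinely new coefficients enter. If a direct inclusion-exclusion for $r\ge 1$ proves unwieldy, a generating-function substitute, showing that the bivariate series $\sum_{n,r}g_r^v(\cals_n)\,y^r x^n/n!$ is rationally determined by its $y=0$ restriction via a single-occurrence cluster decomposition, provides a parallel route.
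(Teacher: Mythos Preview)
Your Step~1 is correct and is exactly how the paper extracts $\LL_j(v)=\LL_j(w)$ from \eqref{qqq1} together with c-Wilf-equivalence.

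Step~2 has the right instinct but, as stated, contains a genuine gap. You propose to derive a linear recurrence in $n$ for $a_n^{(r+1)}$ with coefficients built from the $\LL_j$, and then to invoke ``finitely many initial conditions \dots depending only on data shared by $v$ and $w$.'' That last clause is precisely what needs proof: there is no a~priori reason the small-$n$ values $g_r^v(\cals_n)$ are determined by the $\LL_j(v)$'s and the avoidance numbers alone, and you give no argument for it. Without this the induction does not start, and in fact verifying it requires essentially the full argument you are trying to avoid.

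The paper sidesteps the issue by not writing a recurrence at fixed $r$ at all. It computes the first difference $a_{n-1}^{(k)}(v)-a_n^{(k)}(v)$ directly as a sum of probabilities of the form
\[
\pp\bigl(\calb_1^{i_1-1},\widetilde\calb_{i_1},\calb_{i_1+1}^{i_2-1},\ldots,\calb_{i_{k-2}+1}^{n-1},\widetilde\calb_n\bigr),
\]
and then uses the non-overlapping hypothesis to factor each such term into a product of $a_m^{(1)}(v)$'s (coming from avoidance blocks $\calb_\bullet^\bullet$ that are independent of the neighbouring occurrences) and $\LL_j(v)$'s (coming from maximal chains of occurrences spaced exactly $d-1$ apart). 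Since both families of factors coincide for $v$ and $w$, so does every difference $a_{n-1}^{(k)}-a_n^{(k)}$; telescoping from the pattern-independent boundary where $k-1$ occurrences cannot fit yields $a_n^{(k)}(v)=a_n^{(k)}(w)$ for all $n$. No separate check of initial data is needed, because the only boundary input is this universal vanishing.

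If you want to repair your outline, replace the ``recurrence with matched initial conditions'' scheme by this direct factoring of $a_{n-1}^{(k)}-a_n^{(k)}$; your phrase ``local counts decompose into pieces already accounted for by $\LL_j(v)$'' is then made precise, and the troublesome initial conditions disappear. The generating-function alternative you mention would also work, but unpacking it amounts to the same factoring once one writes out the cluster polynomial of a non-overlapping pattern.
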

	
	It is noteworthy that the proof for words is identical and is omitted. 
	
	Next, following the approach of Theorem \ref{non-overlapping-bound-thm}, we express a recursive equation for monotone patterns. This can be formulated as follows:
	
	\begin{theorem} \label{non-monotone-bound-thm}
		Let $v\in \cals_d$ be a monotone pattern, that is $v=1\cdots d$ or $v=d\cdots 1$.  For $k\geq 2$ and $n\geq
		k(d-1),$ set $m:=\floor{\frac{n-1}{d}}$. Then,
		\beq
		a_{n-1}^{(1)}(v) - a_n^{(1)}(v)  + \sum_{k=1}^m (-1)^{k} M_k a_{n-kd-1}^{(1)}(v) + \widetilde M_m= 0,
		\feq
		where $M_k$ and $\widetilde M_m$ are defined by \eqref{M_k_def1}, \eqref{M_k_def2}, and \eqref{M_k_def3}.
	\end{theorem}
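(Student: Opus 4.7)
My plan is to adapt the inclusion--exclusion argument used for Theorem~\ref{non-overlapping-bound-thm} to the extremal overlap setting $\calo_v = [d-1]$, where occurrences of $v$ can coalesce freely. Writing $p_N := g_0^v(\cals_N)/N! = a_{N-d+1}^{(1)}(v)$, and considering a uniform random $\pi\in\cals_N$ with $N := n+d-1$, the standard marginalization argument (reducing $\pi_1,\dots,\pi_{N-1}$) gives
\[
a_{n-1}^{(1)}(v) - a_n^{(1)}(v) \;=\; p_{N-1}-p_N \;=\; \pp\!\left(A_{N-d+1}\cap \bigcap_{i=1}^{N-d}A_i^c\right),
\]
where $A_i$ is the event that $\pi_i<\pi_{i+1}<\cdots<\pi_{i+d-1}$ (for $v=1\cdots d$; the case $v=d\cdots 1$ is identical after reversal). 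Expanding by inclusion--exclusion on the $A_i^c$ produces a sum indexed by $S\subseteq[N-d]$.

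For the monotone pattern, $\bigcap_{i\in T}A_i$ is merely a conjunction of required ascents at the positions in $\bigcup_{i\in T}[i,i+d-2]$. This set is a disjoint union of maximal intervals---the \emph{merged blocks} $B_1,\dots,B_r$---and a symmetry argument over uniform permutations factors the probability as $\prod_{\ell} 1/|B_\ell|!$, where $|B_\ell|$ counts positions in the block. In contrast with the non-overlapping case, arbitrarily many $A_i$'s may fuse into one block.

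The combinatorial heart of the argument is to regroup the inclusion--exclusion sum by the rightmost merged block, i.e.\ the block containing position $N-d+1$. Parameterizing by its leftmost position $a$, the elements of $S\cup\{N-d+1\}$ inside the block form a chain $a=j_0<j_1<\cdots<j_{r-1}<j_r=N-d+1$ with consecutive gaps in $[1,d-1]$, while the remainder $S':=S\cap[1,a-d]$ lies strictly to the left. The trailing block contributes the factor $1/(N-a+1)!$, the inner inclusion--exclusion over $S'$ collapses (by the same reasoning applied to a uniform permutation in $\cals_{a-1}$) to $p_{a-1}$, and the signed chain count is evaluated through the generating-function identity
\[
\sum_{r\geq 0}(-1)^r(z+z^2+\cdots+z^{d-1})^r \;=\; \frac{1-z}{1-z^d},
\]
whose coefficient sequence takes only the values $\pm 1$ or $0$. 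Collecting the surviving contributions yields the two-stream alternating identity
\[
p_{N-1}-p_N \;=\; \sum_{j\geq 1}\frac{p_{N-jd}}{(jd)!}\;-\;\sum_{j\geq 1}\frac{p_{N-jd-1}}{(jd+1)!},
\]
with the natural boundary truncations.

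The final step is to rearrange this identity into the form stated in the theorem by reading off $M_k$ and $\widetilde M_m$ from \eqref{M_k_def1}--\eqref{M_k_def3}. I expect this bookkeeping to be the main obstacle: the two streams must be aligned against the single index $a_{n-kd-1}^{(1)}(v)=p_{N-kd-1}$, which requires either iterating the identity to eliminate the $p_{N-jd}$ stream (via the analogue of the identity for $p_{N-jd-1}-p_{N-jd}$ at smaller indices) or, more directly, absorbing the boundary contributions (where the index drops below $d$ and so $p_{\cdot}=1$) together with the residual $p_{N-jd}$ terms into the single constant $\widetilde M_m$, while verifying that the truncations $m=\floor{(n-1)/d}$ and the hypothesis $n\geq k(d-1)$ are exactly what is needed for the rewritten sum to terminate correctly. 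The earlier probabilistic and combinatorial steps are direct monotone analogues of the argument given for Theorem~\ref{non-overlapping-bound-thm} in Section~\ref{perm_linear}.
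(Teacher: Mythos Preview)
Your derivation up through the two--stream identity
\[
p_{N-1}-p_N \;=\; \sum_{j\ge 1}\frac{p_{N-jd}}{(jd)!}\;-\;\sum_{j\ge 1}\frac{p_{N-jd-1}}{(jd+1)!}
\]
is correct (and elegant): the block--merging factorisation and the generating function $\frac{1-z}{1-z^d}$ for the signed chain count are exactly right for the monotone pattern. The problem is that this identity is \emph{not} the identity in Theorem~\ref{non-monotone-bound-thm}. The theorem asserts a recursion whose only nonconstant terms are $a_{n-kd-1}^{(1)}(v)=p_{N-kd-1}$, with coefficients the specific probabilities
\[
M_k \;=\; \pp\bigl(\widetilde\calb_1,\ Y_{d+1}<Y_d,\ \widetilde\calb_{d+1},\ \ldots,\ \widetilde\calb_{(k-1)d+1},\ Y_{kd+1}<Y_{kd}\bigr),
\]
i.e.\ probabilities of \emph{zigzag} events built from alternating increasing runs of length $d$ and single descents. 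These are not $\frac{1}{(jd)!}$ or $\frac{1}{(jd+1)!}$; already $M_1=\frac{1}{d!}-\frac{1}{(d+1)!}$, and $M_2$ is an alternating--permutation count that does not factor. Your recursion has an entire extra stream of terms $p_{N-jd}$ that the stated theorem simply does not contain.

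The rearrangement you defer is therefore not bookkeeping. Iterating your identity at shifted indices introduces new indices (e.g.\ $p_{N-jd+1}$) rather than eliminating the unwanted stream, and absorbing the $p_{N-jd}$ terms into $\widetilde M_m$ is not legitimate: the paper's $\widetilde M_m$ is defined independently by \eqref{M_k_def3} as a fixed probability depending only on $m$, not as whatever residual your recursion happens to leave. What your argument is missing is precisely the device the paper uses: for the monotone pattern, $\widetilde\calb_s\cap\calb_{s+1}$ is equivalent to $\widetilde\calb_s\cap\{Y_{s+d}<Y_{s+d-1}\}$, and that single descent both forces $\calb_{s+2}^{s+d-1}$ and renders the prefix $(Y_1,\dots,Y_{s+d})$ independent of $\calb_{s+d+1}^n$. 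This descent--separator trick is what produces one clean factor $M_k\,a_{n-kd-1}^{(1)}(v)$ at each peel, and it has no counterpart in a full inclusion--exclusion over arbitrary merged blocks. Your route proves a correct companion identity, but to establish the theorem as stated you would still need the paper's structural observation.
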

	
	Obtaining equations similar to those demonstrated by Theorem \ref{non-overlapping-bound-thm} and Theorem \ref{non-monotone-bound-thm} is often challenging, if not impossible, in most cases. However, we can derive the following inequality applicable to any permutation pattern.
	
	\begin{theorem} \label{war-thm}
		Let $v$ be any pattern in $S_d$. Fix $\ell\geq 2$. Then, for $n>\ell+2d$ we have
		\beq
		0\leq a_{n}^{(1)}(v) -  a_{n-1}^{(1)}(v) + \beta_{\ell-1}(v) a_{n-\ell-d+1}^{(1)}(v) \leq \frac{d-1}{d!} a_{n-\ell-2d+2}^{(1)}(v),        \feq
		where 
		\beqn \label{beta_ell}
		\beta_{\ell-1}(v) := a_{\ell-1}^{(1)}(v) - a_{\ell}^{(1)}(v) =  \frac{g^v_{0}(\cals_{\ell+d-2})}{(\ell+d-2)!} - \frac{g^v_{0}(\cals_{\ell+d-1})}{(\ell+d-1)!} .
		\feqn
	\end{theorem}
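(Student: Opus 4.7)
The plan is to work in a common probability space: let $\sigma\in\cals_N$ with $N=n+d-1$ be uniformly random, and for each $i\in[n]$ define the event $E_i:=\{\red(\sigma_i\sigma_{i+1}\cdots\sigma_{i+d-1})=v\}$; each event has probability $1/d!$ and $a_n^{(1)}(v)=\PP\bigl(\bigcap_{i=1}^n E_i^c\bigr)$. Two consequences will be used throughout. First, inclusion of events gives
\[
a_{m-1}^{(1)}(v)-a_m^{(1)}(v)=\PP\Big(E_m\cap\bigcap_{i=1}^{m-1}E_i^c\Big)\ge 0,
\]
so $a_m^{(1)}(v)$ is non-increasing in $m$. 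Second, since the reduction of any contiguous window of $\sigma$ is uniform on the appropriate symmetric group, applying the identity above inside the right-end window of length $\ell+d-1$ gives $\beta_{\ell-1}(v)=\PP(A)$, where
\[
A:=E_n\cap\bigcap_{i=n-\ell+1}^{n-1}E_i^c
\]
depends only on positions in $J:=\{n-\ell+1,\ldots,n+d-1\}$.

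The key auxiliary tool is a disjoint-block independence lemma: events determined by the relative orders of $\sigma$ on disjoint index sets are independent. Indeed, for disjoint $I,J\subseteq[N]$, conditioning on the value-set $\sigma(I)$ makes $\red(\sigma|_I)$ and $\red(\sigma|_J)$ independent and uniform on $\cals_{|I|}$ and $\cals_{|J|}$, and this conditional distribution does not depend on $\sigma(I)$. Defining $B:=\bigcap_{i=1}^{n-\ell-d+1}E_i^c$, which is supported on positions $I:=\{1,\ldots,n-\ell\}$ (disjoint from $J$), independence gives $\PP(A\cap B)=\PP(A)\PP(B)=\beta_{\ell-1}(v)\,a_{n-\ell-d+1}^{(1)}(v)$. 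Letting $C:=\bigcap_{i=n-\ell-d+2}^{n-\ell}E_i^c$ denote the \emph{straddling} exclusions, one has $E_n\cap\bigcap_{i=1}^{n-1}E_i^c = A\cap B\cap C$, so
\[
a_n^{(1)}(v)-a_{n-1}^{(1)}(v)+\beta_{\ell-1}(v)\,a_{n-\ell-d+1}^{(1)}(v) = \PP(A\cap B)-\PP(A\cap B\cap C) = \PP(A\cap B\cap C^c)\ge 0,
\]
which is the lower bound.

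For the upper bound, a union bound $C^c=\bigcup_{i=n-\ell-d+2}^{n-\ell}E_i$ (a union of $d-1$ events) reduces the task to estimating $\PP(A\cap B\cap E_i)$ for each boundary $i$. For such $i$ the subset $B':=\bigcap_{j=1}^{i-d}E_j^c$ contains $B$ and is supported on positions $\{1,\ldots,i-1\}$, disjoint from $\{i,\ldots,n+d-1\}$ which supports $A\cap E_i$. Disjoint-block independence together with $\PP(A\cap E_i)\le\PP(E_i)=1/d!$ gives
\[
\PP(A\cap B\cap E_i) \le \PP(B')\,\PP(A\cap E_i) \le \frac{1}{d!}\,a_{i-d}^{(1)}(v).
\]
Since $i-d\ge n-\ell-2d+2$ in the boundary range, the monotonicity of $a_m^{(1)}(v)$ followed by summing the $d-1$ such terms yields the claimed upper bound $\frac{d-1}{d!}\,a_{n-\ell-2d+2}^{(1)}(v)$.

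The main obstacle is the position bookkeeping: whenever an exclusion is dropped from $B$ in order to absorb a straddling $E_i$, one must verify that the retained events are supported on a block of positions genuinely disjoint from the other factor, so that disjoint-block independence applies and the constants $\frac{d-1}{d!}$ and the shift $n-\ell-2d+2$ come out exactly as stated.
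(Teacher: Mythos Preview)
Your proof is correct and follows essentially the same approach as the paper: both identify the quantity $a_n^{(1)}-a_{n-1}^{(1)}+\beta_{\ell-1}\,a_{n-\ell-d+1}^{(1)}$ with the probability of the event ``avoid on the left block, avoid on the right window ending in a hit, but \emph{fail} to avoid on the $d-1$ straddling windows,'' and then bound that probability from above by a union bound combined with block independence. The only cosmetic differences are that the paper works with i.i.d.\ uniform $[0,1]$ variables rather than a uniform permutation (equivalent for order-isomorphism events), and in the upper bound the paper factors out $\calb_1^{n-\ell-2d+2}$ once and then union-bounds $\widetilde{\calb_{n-\ell-d+2}^{n-\ell}}$, whereas you union-bound first and then factor out $B'$ term by term---your ordering is slightly cleaner but the content is the same.
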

	
	A proof of this inequality is presented in the Section \ref{growth-bound-sec}. It is worth noting that this inequality can be modified to hold for words. More specifically, suppose $\ell\geq 2$ and $v\in \cals_d$. Then, for $n>\ell+2d$ we have
	\beq
	0\leq h_{\malp,n}^{(1)}(v) -  h_{\malp,n-1}^{(1)}(v) + \beta_{\ell-1}(v) h_{\malp,n-\ell-d+1}^{(1)}(v) \leq \frac{d-1}{\malp^d} h_{\malp,n-\ell-2d+2}^{(1)}(v),        
	\feq
	where 
	\beq
	\beta_{\ell-1}(v) :=h_{\malp, \ell-1}^{(1)}(v) - h_{\malp, \ell}^{(1)}(v) =  \frac{g^v_{0}(\alp{\ell+d-2})}{\malp^{\ell+d-2}} - \frac{g^v_{0}(\alp{\ell+d-1})}{\malp^{\ell+d-1}} .
	\feq
	
	The proof is nearly identical to that of Theorem \ref{war-thm} and is therefore omitted.
	
	Having obtained the inequality of Theorem~\ref{war-thm}, our attention now turns to the growth rate of consecutive pattern avoidance in permutations. 
	
	Ehrenborg et al. \cite{kitaev1} previously studied the growth rate of $g_0^v(\cals_n)$, proving Warlimont's conjecture \cite{war1} on the asymptotic behavior of the number of permutations that avoid consecutive patterns using the spectral theory of integral operators on $L_2([0,1]^{d-1})$. Specifically, they demonstrated that the asymptotic value of $g_0^v(\mathcal{S}_n)$ is given by $g_0^v(\mathcal{S}_n) =  \left(b_v \rho_v^{n} + O(r_v^n)\right)n!$, where $b_v$, $\rho_v$, and $r_v$ are positive constants that depend solely on $v$, with $\rho_v>r_v$. While it is not the main focus of our investigation, it would be worthwhile to obtain a similar asymptotic statement for $g_r^v(\cals_n)$ for any other fixed value of $r \in \nn$, and to determine whether any of the constants involved in the $r=0$ case are present in the asymptotic form of the general case for $r$. Notably, if $r\geq n-d+2$ and all elements of $\cala$ have lengths of at most $n$, then $g_r^v(\cala)=0$, as any word in $\cala$ has at most $n-d+1$ consecutive subwords of length $d$ that could potentially be order-isomorphic to $v$. Furthermore, for the case of $\cala=\cals_n$, we have the following lemma that, to the best of our knowledge, is new in the context of consecutive pattern containment:
	\begin{lemma}
		For any $v\in \cals_d$, and any fixed $r\in \nn_0$, the limit $\lim_n \left( \frac{g_r^v(\cals_n)}{n!} \right)^{\frac{1}{n}}$ exists and is equal to $\rho_v$.
	\end{lemma}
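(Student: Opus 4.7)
The plan is to sandwich $g_r^v(\cals_n)/n!$ between two quantities whose $n$-th roots both converge to $\rho_v$, using the Ehrenborg--Kitaev--Perry asymptotic $g_0^v(\cals_n) = (b_v\rho_v^n + O(r_v^n))n!$ recalled in the preceding paragraph as the crucial input. The case $r=0$ is then immediate; what follows is for $r\geq 1$.

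For the upper bound I would use a block decomposition. Fix $m \in \nn$ and partition $[n]$ into $K = \lfloor n/m \rfloor$ consecutive blocks of length $m$, absorbing any leftover into the last block. Each consecutive occurrence of $v$ in a permutation $\pi$ either lies entirely inside one block (\emph{interior}) or straddles a block boundary, so if $\con_v(\pi)=r$ then at most $r$ blocks contain an interior occurrence, i.e.\ at least $K-r$ blocks restrict to length-$m$ words consecutively avoiding $v$. For a fixed $S \subseteq [K]$ of size $K-r$, decompose the choice of $\pi$ into an assignment of value-subsets to the $K$ blocks and independent orderings inside each block; requiring each block of $S$ to be an interior avoider gives exactly $n!(g_0^v(\cals_m)/m!)^{K-r}$ valid $\pi$. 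Union-bounding over the $\binom{K}{r}$ choices of $S$ yields
\beq
\frac{g_r^v(\cals_n)}{n!} \le \binom{K}{r}\left(\frac{g_0^v(\cals_m)}{m!}\right)^{K-r}.
\feq
Taking $n$-th roots with $m$ fixed, the binomial factor contributes $1$ and $(K-r)/n \to 1/m$, so $\limsup_n(g_r^v(\cals_n)/n!)^{1/n} \le (g_0^v(\cals_m)/m!)^{1/m}$; sending $m\to\infty$ and invoking the Ehrenborg--Kitaev--Perry asymptotic on the right gives $\limsup_n(g_r^v(\cals_n)/n!)^{1/n} \le \rho_v$.

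For the lower bound I would construct an injection $\{\sigma\in\cals_{n-D}: \con_v(\sigma)=0\}\hookrightarrow\{\pi\in\cals_n:\con_v(\pi)=r\}$ for a constant $D=D(v,r)$: extend $\sigma$ to $\pi$ by appending to its end $r$ disjoint copies of $v$, each preceded by a short buffer of length $d-1$ whose values are drawn from a carefully chosen range so that no length-$d$ window crossing a buffer-block or $\sigma$-buffer boundary is order-isomorphic to $v$. Since $\sigma$ is recovered by deleting and relativizing the last $D$ positions, this map is injective, so $g_r^v(\cals_n)\ge g_0^v(\cals_{n-D})$; dividing by $n!$ and applying the asymptotic to the right-hand side yields $\liminf_n(g_r^v(\cals_n)/n!)^{1/n}\ge\rho_v$. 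Combining with the upper bound proves the claim.

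The main obstacle is designing the buffers in the lower-bound construction so that straddling occurrences are suppressed uniformly over all $v\in\cals_d$: a naive ``all largest values'' tail creates spurious occurrences precisely when $v$ can be written as an ascending concatenation of two smaller patterns, and in particular the monotone pattern $v = 1\cdots d$ needs either a mid-range buffer or an entirely different appending template. The upper bound, by contrast, is a routine block decomposition and presents no difficulty.
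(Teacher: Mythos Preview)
The paper itself omits the proof, citing Theorem~B of \cite{reza2}, so there is no in-paper argument to compare against.

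Your upper bound via block decomposition is correct and routine. Your lower-bound injection is also the right idea, and the buffer obstacle you flag has a clean uniform resolution that removes the need for any case analysis or mid-range values. Observe that no pattern $v\in\cals_d$ is simultaneously sum-decomposable (meaning $\{v_1,\dots,v_k\}=[k]$ for some $1\le k\le d-1$) and skew-decomposable (meaning $\{v_1,\dots,v_j\}=\{d-j+1,\dots,d\}$ for some $1\le j\le d-1$): if both held with, say, $k\le j$, then $[k]\subset\{v_1,\dots,v_j\}=\{d-j+1,\dots,d\}$ would force $j\ge d$. Now, if $v$ is sum-indecomposable, place $\sigma$ on the values $[n-rd]$ and simply append $\tau=v^{(1)}\cdots v^{(r)}$ with $v^{(i)}$ the copy of $v$ on the value block $\{n-(r-i+1)d+1,\dots,n-(r-i)d\}$; every window straddling a seam (whether the $\sigma$--$\tau$ seam or one between consecutive copies) then has all its leftmost entries smaller than all its rightmost entries, and so could be order-isomorphic to $v$ only if $v$ were sum-decomposable at the corresponding split. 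If instead $v$ is skew-indecomposable, run the mirror construction with $\sigma$ on the top $n-rd$ values and the copies of $v$ on descending value blocks drawn from $[rd]$. One of the two cases always applies---in particular $1\cdots d$ is skew-indecomposable and $d\cdots 1$ is sum-indecomposable---so no separate buffer is needed at all, and your injection $\calg_0^v(\cals_{n-rd})\hookrightarrow\calg_r^v(\cals_n)$ goes through for every $v$. With this fix in place your sandwich argument is complete.
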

	The proof is similar to that of Theorem B, \cite{reza2} and hence it is omitted. A similar result can be shown for the words by conducting a modified version of Theorem 1.1-(a), \cite{reza1}. 
	
	In our next result, we provide further insights into the growth rate parameter $\rho_v$ in permutations. One major result in consecutive pattern containment posits that, among all patterns of fixed length $d$, the increasing pattern $12\cdots d$ (and the decreasing pattern $d\cdots 1$) is the one for which the number of permutations avoiding it is asymptotically the most significant. In particular, Elizalde (see \cite{eliz4}) proved that, for any pattern $v\in\mathcal{S}_d$,
	\beq
	\rho_{1\cdots(d-2)d(d-1)} \leq \rho_v \leq \rho_{12\cdots (d-1)d},
	\feq
	and for non-overlapping pattern $v\in\mathcal{S}_d$,
	\beq
	\rho_{1\cdots(d-2)d(d-1)} \leq \rho_v \leq \rho_{134\cdots d2}.
	\feq
	
	Furthermore, the study by Perarnau \cite{pera1} employed various probabilistic techniques to establish that for any $d\in\nn$ and non-monotone pattern $v\in\mathcal{S}_d$, the asymptotic growth rate of the number of permutations in $\mathcal{S}_n$ avoiding the consecutive pattern $v$ is bounded above by $1-\frac{1}{d!} + O(\frac{1}{d^2 d!})$, while for any $v\in\mathcal{S}_d$, it is bounded below by $1-\frac{1}{d!} - O(\frac{d-1}{ d!^2})$. Our result augments theirs. 
	
	\newpage 
	\begin{theorem} \label{growth_theorem}
		Let $v$ be any pattern in $S_d$.  
		\begin{itemize}
			\item[(a)] Then, for any $\alpha \geq 2$, and $\lambda := 1+d(\alpha-1),$
			\beqn \label{mineq}
			\left|a_{\alpha kd}^{(1)}(v) - \left(a_{\lambda}^{(1)}(v)\right)^{k}\right| \leq  (k+2) \left(a_{\lambda}^{(1)}(v)\right)^{k-1}, \qquad \forall k\in \nn.
			\feqn
			In particular, this implies
			\beqn
			\rho_v \leq  \left( \frac{g^v_{0}(\cals_{\alpha d})}{(\alpha
				d)!}\right)^{\frac{1}{\alpha d}}. \label{rho_bound}
			\feqn
			\item[(b)] For any $\ell\in \nn$, larger than $1$, 
			\beq
			\rho_{v,l} \leq \rho_v \leq \rho_{v, u},
			\feq
			where $\rho_{v,l}$ and $\rho_{v,u}$ are the dominant roots of the equations
			\beqn \label{poly1}
			\rho^{\ell+d-1} - \rho^{\ell+d-2} + \beta_{\ell-1}(v) = 0
			\feqn
			and
			\beqn \label{poly2}
			\rho^{\ell+2d-2} - \rho^{\ell+2d-3} + \beta_{\ell-1}(v) \rho^{d-1} -\frac{d-1}{d!} = 0,
			\feqn
			where $\beta_{\ell-1}(v)$ is defined by \eqref{beta_ell}.
			In particular, choosing $\ell$ such that $\beta_{\ell-1}(v) \neq \frac{(m-1)^{m-1}}{m^m}$ with $m:=d+\ell-1$, there exists $\gamma\in (1,\infty)$ such that 
			\beq
			1-\beta_{\ell-1}(v) - \left(\beta_{\ell-1}(v)\right)^{\gamma}  \leq \rho_v.
			\feq
		\end{itemize}
	\end{theorem}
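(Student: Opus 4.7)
The plan is a short probabilistic block-decomposition argument. Let $N=\alpha kd+d-1$ and let $\sigma$ be a uniformly random element of $\cals_N$. Partition the positions of $[N]$ into $k$ consecutive disjoint blocks $B_1,\ldots,B_k$ of length $d\alpha$ each, followed by a tail of $d-1$ positions. The elementary key fact, which I would first verify by a direct counting decomposition of the uniform law on $\cals_N$, is that the reductions $\red(\sigma|_{B_j})$ are independent and each uniformly distributed on $\cals_{d\alpha}$. Since any consecutive occurrence of $v$ that lies entirely inside a block $B_j$ descends to a consecutive occurrence in $\red(\sigma|_{B_j})$, the event ``$\sigma$ avoids $v$'' is contained in the intersection of the events ``$\red(\sigma|_{B_j})$ avoids $v$''; independence then gives
\[
a_{\alpha kd}^{(1)}(v)=\PP(\sigma\text{ avoids }v)\leq\prod_{j=1}^k\PP(\red(\sigma|_{B_j})\text{ avoids }v)=\bigl(a_\lambda^{(1)}(v)\bigr)^k,
\]
using $a_\lambda^{(1)}(v)=g_0^v(\cals_{d\alpha})/(d\alpha)!$ together with $\lambda+d-1=d\alpha$. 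Combined with $a_{\alpha kd}^{(1)}(v)\geq 0$ and $a_\lambda^{(1)}(v)\in[0,1]$, this yields $|a_{\alpha kd}^{(1)}(v)-(a_\lambda^{(1)}(v))^k|\leq (a_\lambda^{(1)}(v))^k\leq (a_\lambda^{(1)}(v))^{k-1}\leq(k+2)(a_\lambda^{(1)}(v))^{k-1}$, which is \eqref{mineq}. Taking $(\alpha kd)$-th roots in $a_{\alpha kd}^{(1)}(v)\leq (a_\lambda^{(1)}(v))^k$ and letting $k\to\infty$, via the growth-rate lemma stated just before the theorem, then produces \eqref{rho_bound}.

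\textbf{Part (b).} For the sandwich $\rho_{v,l}\leq\rho_v\leq\rho_{v,u}$ I would invoke Theorem~\ref{war-thm} via a comparison argument. For the lower bound on $\rho_v$: if $\rho_v<\rho_{v,l}$, pick $\rho_v<\rho'<\rho_{v,l}$ so that $a_n^{(1)}(v)\leq C(\rho')^n$ for all large $n$ by definition of $\rho_v$; substituting into the left inequality of Theorem~\ref{war-thm}, dividing by $(\rho')^{n+d-1}$ and letting $n\to\infty$ forces the polynomial in \eqref{poly1} to be $\leq 0$ at $\rho'$, contradicting that $\rho_{v,l}$ is its largest real root. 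The upper bound $\rho_v\leq\rho_{v,u}$ follows identically from the right-hand inequality of Theorem~\ref{war-thm} paired with \eqref{poly2}. For the final explicit lower bound, rewrite \eqref{poly1} as $\rho^{m-1}(1-\rho)=\beta_{\ell-1}(v)$ with $m:=\ell+d-1$. The function $\rho\mapsto\rho^{m-1}(1-\rho)$ attains its maximum $(m-1)^{m-1}/m^m$ on $[0,1]$ at $\rho=(m-1)/m$; the hypothesis $\beta_{\ell-1}(v)\neq(m-1)^{m-1}/m^m$ therefore ensures two distinct roots of \eqref{poly1} in $(0,1)$, with $\rho_{v,l}>(m-1)/m$. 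Writing $\rho_{v,l}=1-\varepsilon$ and solving $\varepsilon(1-\varepsilon)^{m-1}=\beta_{\ell-1}(v)$ by a Taylor expansion about $\varepsilon=0$ yields an estimate of the form $\varepsilon\leq\beta_{\ell-1}(v)+\beta_{\ell-1}(v)^\gamma$ for a suitable $\gamma\in(1,\infty)$ that depends on the distance between $\beta_{\ell-1}(v)$ and the critical value, which is exactly the claimed bound $\rho_v\geq 1-\beta_{\ell-1}(v)-\beta_{\ell-1}(v)^\gamma$.

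\textbf{Main obstacle.} The subtle step is the comparison argument in part (b): propagating the bound $a_n^{(1)}(v)\leq C(\rho')^n$ through the multi-term inequality of Theorem~\ref{war-thm} requires controlling all error terms uniformly in $n$ and, for the polynomial \eqref{poly2} in particular, verifying that its relevant dominant real root lies in $(0,1)$ and that the sign of the polynomial flips in the right direction at $\rho'$. Analogous care is needed for the Taylor estimate converting $\rho_{v,l}=1-\varepsilon$ into an explicit polynomial-type lower bound, since the admissible exponent $\gamma$ depends on how close $\beta_{\ell-1}(v)$ sits to the critical value $(m-1)^{m-1}/m^m$ that the hypothesis excludes.
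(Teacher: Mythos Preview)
Your argument is correct and in fact cleaner than the paper's. The paper proves \eqref{mineq} via a telescoping identity
\[
\pp(\calb_1^{\alpha kd})-\pp(\calb_1^{\lambda})^k=\sum_{j=0}^{k-1}\pp(\calb_1^{\lambda})^j\bigl(\pp(\calb_1^{\alpha d(k-j)})-\pp(\calb_1^{\lambda})\pp(\calb_1^{\alpha d(k-j-1)})\bigr)
\]
and then bounds each summand separately using stationarity and the $d$-step independence. Your block argument bypasses all of this: block independence gives the one-sided inequality $a_{\alpha kd}^{(1)}(v)\le (a_\lambda^{(1)}(v))^k$ outright, from which \eqref{mineq} is trivial and \eqref{rho_bound} is immediate. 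What the paper's route buys is a genuine two-sided estimate on the gap; what yours buys is brevity and the sharper pointwise inequality, which is all that is needed for \eqref{rho_bound}.

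\textbf{Part (b), sandwich.} Here your sketch has a real gap. You propose to assume $\rho_v<\rho_{v,l}$, take $\rho'\in(\rho_v,\rho_{v,l})$, use $a_n^{(1)}(v)\le C(\rho')^n$, and substitute into $0\le a_n-a_{n-1}+\beta_{\ell-1}a_{n-\ell-d+1}$. But an upper bound on each $a_j$ tells you nothing about the sign of $a_n-a_{n-1}$, so dividing by $(\rho')^{n+d-1}$ and passing to the limit does not force any inequality on the polynomial at $\rho'$. The same objection applies to the upper bound via \eqref{poly2}. The remedy is to use the two-sided asymptotic $a_n^{(1)}(v)=b_v\rho_v^n+O(r_v^n)$ of Ehrenborg--Kitaev--Perry (which the paper cites): dividing Theorem~\ref{war-thm} by $\rho_v^{\,n}$ and letting $n\to\infty$ gives directly $p_1(\rho_v)\ge 0$ and $p_2(\rho_v)\le 0$ for the polynomials in \eqref{poly1}--\eqref{poly2}, and then one argues $\rho_v$ lies on the correct side of the dominant root. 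The paper itself only says this step ``follows directly from Theorem~\ref{war-thm}'' without details, so your instinct that this is the delicate point is right; but the mechanism you wrote down does not work as stated.

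\textbf{Part (b), explicit lower bound.} This is a genuinely different route from the paper. The paper does not Taylor-expand; it applies Rouch\'e's theorem to $g(x)=x^m-x^{m-1}$ versus $f(x)=g(x)+\beta_{\ell-1}$ on a circle of radius $\delta=\beta_{\ell-1}^{1/((m-1)(1+\alpha))}$, concluding that $m-1$ roots of $f$ lie inside the disk and the remaining (necessarily real, dominant) root satisfies $\rho_{v,l}\ge\delta$, whence $1-\beta_{\ell-1}-\beta_{\ell-1}^{\gamma}\le\delta$ for a suitable $\gamma>1$. Your Taylor approach is plausible when $\beta_{\ell-1}$ is small, but you should be careful: writing $\rho_{v,l}=1-\varepsilon$ with $\varepsilon(1-\varepsilon)^{m-1}=\beta_{\ell-1}$ gives $\varepsilon=\beta_{\ell-1}/(1-\varepsilon)^{m-1}$, and since $\varepsilon<1/m$ one only gets $(1-\varepsilon)^{m-1}>(1-1/m)^{m-1}\approx e^{-1}$, hence $\varepsilon-\beta_{\ell-1}$ can exceed $\beta_{\ell-1}$, which would force $\gamma\le 1$ in your inequality $\beta_{\ell-1}^{\gamma}\ge\varepsilon-\beta_{\ell-1}$. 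So the Taylor route, as you wrote it, does not obviously deliver $\gamma>1$ uniformly over the admissible range of $\beta_{\ell-1}$; the Rouch\'e argument sidesteps this by working with a different intermediate quantity $\delta$.
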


	One immediate consequence of part (b) is that $\lim_d \min_{v\in \cals_d}\rho_v = 1$ as one would expect. This is evident since the dominant roots of the equations \eqref{poly1} and \eqref{poly2} converge to $1$ as $d$ grows to infinity. 
		
	We would like to highlight the distinction between Perarnau's approach and ours. While Perarnau's bounds remain same across all non-monotone patterns, our methodology introduces a novel flexibility. By allowing the parameter $\alpha$ and $\ell$ to vary, we offer a spectrum of upper bounds tailored to specific patterns. This dynamic nature of our bounds empowers the reader to adapt her analyses according to the intricacies of each pattern under consideration. Furthermore, our approach enables a direct enumeration of \eqref{beta_ell} or the right-hand side of equation \eqref{rho_bound}, facilitating a more streamlined process for deriving bounds. 
	
	Finally, we note that, for the sake of simplifying the presentation and proof of our results, we restricted our consideration to permutation patterns consistently within $\cals_d$. We maintain that this restriction is purely technical, and that all findings pertaining to words, notably Theorems \ref{words_thm}, \ref{non-overlapping-bound-thm}-(b), and \ref{non-overlapping-wilf-thm}, remain applicable to any $d$-ary pattern within the framework of pattern containment in words.
	
	\section{Consecutive containment in permutations} \label{perm_section}
	
	In order to prove our results, we found it easier to reformulate the problem of consecutive pattern containment within a probabilistic framework. However, it is also possible to present combinatorial proofs of many of these results without relying on the language of probability. Let $(Y_n)$ be a sequence of independently and identically distributed random variables $(Y_n)_{n\in \nn}$, uniformly sampled from $[0,1]$. We will denote by $\pp$ the probability law in a suitable probability
	space that supports all random variables considered in this section. Consider a discrete-time Markov process $(X_n)_{n\in \nn}$ on the space $[0,1]^d$, where $X_n:=Y_{n}\cdots
	Y_{n+d-1}$. The transition probability of $X_n$ is defined as
	\beq
	P(x, \cala) = \lmf(\{y\ | (x_2,\cdots, x_d, y)\in \cala\}), \quad x:=x_1\cdots x_d\in [0,1]^d, \quad \cala\subset [0,1]^d,
	\feq
	where $\lmf$ represents the Lebesgue measure on $[0,1]$. It is well-known that $\red(Y_1 \cdots Y_n)$ follows the same distribution as a uniformly randomly sampled permutation from $\cals_n$. This implies that
	\beq
	a_n^{(r)}(v)=\frac{g_{r-1}^v(\cals_{n+d-1})}{(n+d-1)!} = \pp(\con_v(\red(Y_1\cdots Y_{n+d-1}))=r-1).
	\feq
	In the rest of this section, we will use $\calb_i(v)$ to denote the event $\{\red(X_i)\neq v\}$ and use $\widetilde \calb_i(v)$ to refer to its complement. We define $\calb_{i}^j(v)$ as $\cap_{s=i}^j \calb_s(v)$ for $i\leq j$. When $i>j$, we define $\calb_{i}^j(v)$ as the universe set with probability measure of one. We will drop $v$ whenever the context is clear. The following facts will be used frequently without explicit mention: For $i\in \nn$,
	\beq
	\pp(\widetilde \calb_i) = \frac{1}{d!} \quad \text{and} \quad \pp(\calb_i) = 1-\frac{1}{d!},
	\feq
	and for $i\geq 2$,
	\beq
	\pp(\calb_{1}^{i-1}, \widetilde \calb_{i}) = \pp(\calb_{1}^{i-1}) - \pp(\calb_{1}^{i}).
	\feq
	
	We commence by presenting the following observation, which although not employed directly in our proofs, serves as a straightforward proof of the existence of the growth rate $\rho_v$ and the inequality $\rho_v<1$. This outcome was originally demonstrated in \cite{eli1} (see Theorem 4.1 herein). It is noteworthy to mention that Ehrenborg et al. \cite{kitaev1} established a more refined outcome, as mentioned in the introduction.
	
	Let $\smf_r^v(x)$ be the exponential generating function of $g_r^v(\cals_n)$; that is,
	\beq
	&&\smf_r^v(x) = \sum_{n=d}^\infty g_r^v(\cals_n)\frac{x^n}{n!}, \quad r\in \nn_0.
	\feq

	\begin{lemma}
		Let $v\in\cals_d$ be a given pattern. Then, there exists a constant $\kappa_v>1$ such that $\smf_0^v(\kappa_v)<\infty$. In particular, this implies that $\rho_v = \lim_n (a_n^{(1)}(v))^{1/n}$ exists and is at most $\frac{1}{\kappa_v}<1$.
	\end{lemma}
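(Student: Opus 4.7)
The plan is to translate the statement into the probabilistic framework just set up. Writing $a_n^{(1)}(v)=\pp(\calb_1^n)$, one exploits the fact that $\calb_i$ depends only on $Y_i,\ldots,Y_{i+d-1}$; hence the subfamily $\{\calb_{jd+1}\}_{j\ge 0}$ is i.i.d.\ with marginal probability $1-1/d!$. Dropping all but these mutually independent events yields the crude bound
\[
a_n^{(1)}(v) \;=\; \pp\!\left(\bigcap_{i=1}^n \calb_i\right) \;\leq\; \pp\!\left(\bigcap_{j=0}^{\lfloor (n-1)/d\rfloor} \calb_{jd+1}\right) \;=\; \left(1-\frac{1}{d!}\right)^{\lfloor (n-1)/d\rfloor+1}.
\]
Setting $q:=(1-1/d!)^{1/d}\in(0,1)$, this gives $a_n^{(1)}(v)\le C_d\, q^n$ for some constant $C_d$. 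Substituting into $\smf_0^v(x)=\sum_{n\ge d} a_{n-d+1}^{(1)}(v)\,x^n$ bounds this series termwise by a geometric series of ratio $qx$, so its radius of convergence is at least $1/q>1$; any $\kappa_v\in(1,1/q)$ then satisfies $\smf_0^v(\kappa_v)<\infty$, which is the main claim.

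For the ``in particular'' part, summability at $\kappa_v$ forces $a_n^{(1)}(v)\,\kappa_v^{n+d-1}\to 0$, hence $\limsup_n(a_n^{(1)}(v))^{1/n}\le 1/\kappa_v<1$. To upgrade $\limsup$ to $\lim$, I would establish the near-submultiplicativity
\[
a_{n+m+d-1}^{(1)}(v) \;\le\; a_n^{(1)}(v)\cdot a_m^{(1)}(v)
\]
from the inclusion $\calb_1^{n+m+d-1}\subseteq \calb_1^n\cap \calb_{n+d}^{n+m+d-1}$ together with the independence of the two events on the right (they involve disjoint blocks of $Y_i$'s) and the stationarity of $(Y_n)$, so the second event has probability equal to $a_m^{(1)}(v)$.

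The only non-routine step is the Fekete-type argument needed to promote $\limsup$ to $\lim$, since the $d-1$ offset in the submultiplicative relation must be absorbed. The cleanest route is to rewrite it as $a_{n+m}^{(1)}(v)\le a_{n-d+1}^{(1)}(v)\,a_m^{(1)}(v)$ (valid for $n\ge d$), iterate to obtain $a_{km}^{(1)}(v)\le (a_{m-d+1}^{(1)}(v))^{k-1}\,a_m^{(1)}(v)$, and then send $k\to\infty$ followed by $m\to\infty$ to match $\liminf_n(a_n^{(1)}(v))^{1/n}$ with $\limsup_n(a_n^{(1)}(v))^{1/n}$. This is presumably the same mechanism underlying the preceding lemma on existence of the limit, whose proof is stated to be similar to that of Theorem~B in~\cite{reza2}; alternatively, one can simply invoke that lemma directly to conclude that $\rho_v=\lim_n(a_n^{(1)}(v))^{1/n}$ exists, and the geometric bound above then gives $\rho_v\le 1/\kappa_v<1$.
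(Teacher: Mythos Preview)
Your argument is correct and takes a genuinely different, more elementary route than the paper. The paper verifies Doeblin's condition for the shift chain $(X_n)$ on $[0,1]^d$ and invokes Meyn--Tweedie (Theorem~16.0.2 of \cite{tw1}) to produce a $\kappa_v>1$ with $\sup_x \ee(\kappa_v^{T}\mid X_1=x)<\infty$, where $T$ is the first hitting time of $\{x:\red(x)=v\}$; it then computes $\ee(\kappa_v^{T})=1+(\kappa_v-1)\smf_0^v(\kappa_v)$ to deduce finiteness, and finishes with Markov's inequality. You bypass the Markov-chain machinery entirely: retaining only the mutually independent events $\calb_{jd+1}$ gives the explicit geometric bound $a_n^{(1)}(v)\le C_d\,q^n$ with $q=(1-1/d!)^{1/d}$, so the radius of convergence of $\smf_0^v$ is visibly at least $1/q>1$. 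This is both simpler and more quantitative, since it names a concrete admissible $\kappa_v$. For the existence of the limit, your near-submultiplicativity $a_{n+m+d-1}^{(1)}\le a_n^{(1)}a_m^{(1)}$ plus a Fekete-type argument is again self-contained; the paper's own proof, by contrast, simply writes $\rho_v=\lim_n(a_n^{(1)})^{1/n}$ at the end without separately justifying existence inside the proof (implicitly leaning on \cite{eli1} or the preceding lemma). The trade-off is that the Doeblin/Meyn--Tweedie template generalizes to settings where no convenient independent subfamily is available, whereas your argument is tailored to the i.i.d.\ structure of $(Y_n)$.
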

	\begin{proof}
		Our initial task is to verify Doeblin's condition for $X_n$ (refer to \cite{tw1} for a detailed discussion). To do so,  let $P^k(.,.)$ denote the $k$-th step transition probability of the Markov chain $(X_n)$. For every $a\in (0,1)$, for any $x\in [0,1]^d$, and any Borel measurable subset $\cala$ of $[0,1]^d$, we have the following inequality:
		\beq
		(1-a) \sum_{k=0}^\infty a^k P^k(x,\cala) &\geq& (1-a) \sum_{k=d}^\infty a^k P^k(x,\cala) \geq (1-a) \sum_{k=d}^\infty a^k
		\lmf(\cala) \\
		&=& a^d \lmf(\cala).
		\feq
		By  Theorem 16.0.2 of \cite{tw1}, this implies that $X_n$ is irreducible and has at most one invariant measure. In the next line, we show that this invariant measure is, in fact, the Lebesgue measure on $[0,1]^d$:
		\beq
		P\lmf(\cala):=\int_{[0,1]^d} P(x,\cala) dx = \int_0^1\cdots \int_0^1 \lmf({(x_2,\cdots,x_d,y)\in \cala}) dx_2\cdots dx_d dy =
		\lmf(\cala),
		\feq
		where $\lmf$ is the Lebesgue measure on $[0,1]^d$. \par
		
		Let $T$ denote the first time that $(X_n)$ visits $\cala_v:=\{x\in [0,1]^d\ | \red(x)=v\}.$ Since Doeblin's condition holds for the aperiodic chain $X_n$, Theorem 16.0.2 of \cite{tw1} also implies the existence of $\kappa_v > 1$ such that
		
		\begin{equation}\label{sup_ineq}
			\sup_{x\in [0,1]^d} \ee(\kappa_v^T|X_1=x) < \infty,
		\end{equation}
		where $\ee$ denotes the expectation operator with respect to the probability measure $\pp$. The inequality \eqref{sup_ineq} implies that $\ee(\kappa_v^T)$ is finite. To proceed, we can write:
		\beq
		&&\ee(\kappa_v^T) = \sum_{n=1}^\infty \pp(T=n)\kappa_v^n = \sum_{n=1}^\infty \left(\pp(T>n-1) - \pp(T>n)\right) \kappa_v^n \\
		&& \quad =  1 + (\kappa_v-1)\sum_{n=1}^\infty \pp(T>n) \kappa_v^n = 1 + (\kappa_v-1)\smf _0^v(\kappa_v).
		\feq 
		We have now completed the proof of the first part. To prove the second part, we can simply apply Markov's inequality:
		\beq
		\pp(T>n\ |\ X_1=x) = \pp(\kappa_v^{T}>\kappa_v^n \ | \ X_1=x) \leq \frac{ \ee(\kappa_v^{T}\ |\ X_1=x)}{\kappa^n}.
		\feq
		Therefore, we can write:
		\beq
		a_n^{(1)}(v) = \pp(T>n) \leq \int_{[0,1]^d}\frac{ \ee(\kappa_v^{T}\ | \ X_1=x)}{\kappa_v^n}dx \leq \left(\sup_{x\in[0,1]^d}
		\ee(\kappa_v^{T}\ | \ X_1=x)\right)\kappa_v^{-n}.
		\feq
		This implies that:
		\beq
		\rho_v = \lim_n (a_n^{(1)}(v))^{\frac{1}{n}} \leq \frac{1}{\kappa_v} < 1,
		\feq
		which completes the proof of the second part.
	\end{proof}
	
	\subsection{Linear recursions and c-Wilf-equivalence} \label{perm_linear}
	
	In this section, we will utilize repetitive exclusion-inclusion arguments to derive linear relations and deduce information about c-Wilf-equivalence among patterns. The first main result we will present is the
	
	\begin{proof} [Proof of Theorem~\ref{non-overlapping-bound-thm}-(a)] 
		One key observation is that when $v$ is non-overlapping, $\widetilde\calb_s$ implies $\calb^{s+d-2}_{s+1}$ for any $s\in \nn$. We can use this fact to express $a^{(1)}_n(v)$ as a linear recursion. Specifically, for all $n\geq d$, we have:
		\beq
		&& a_{n-1}^{(1)}(v) - a_n^{(1)}(v) = \pp(\calb_2^n) - \pp(\calb_1^n) =  \pp(\widetilde \calb_1, \calb_d^n) 
		\feq
		If $n\geq 2d-1$, we can repeat the same argument for $\calb_d$ to obtain:
		\beq
		a_{n-1}^{(1)}(v) - a_n^{(1)}(v) = \pp(\widetilde \calb_1, \calb_{d+1}^{n}) - \pp(\widetilde \calb_1, \widetilde \calb_d, \calb_{2d-1}^{n})
		\feq
		By applying this argument several times, we obtain:
		\beq
		a_{n-1}^{(1)}(v) - a_n^{(1)}(v) =  \sum_{k=0}^{m} (-1)^{k} \pp(\widetilde \calb_1, \cdots, \widetilde \calb_{k(d-1)+1}, \calb_{k(d-1)+d+1}^n),
		\feq
		where $m:=\floor{\frac{n-1}{d-1}}$.
		
		We now analyze the terms in the summation from the previous paragraph. For any $0\leq k\leq m,$ we note that $\calb_{kd-k+d+1}^n$ is independent of all the other events $\widetilde \calb_1, \cdots, \widetilde \calb_{k(d-1)+1}$, so their joint probability can be expressed as a product. Let $\LL_0(v)=\frac{1}{d!}$ and recall that, by stationarity, $\pp(\calb_{kd-k+d+1}^n) = a^{(1)}_{n-kd+k-d}(v)$. Therefore, for $k\in \nn$, we can write
		\beq
		\pp(\widetilde \calb_1, \cdots, \widetilde \calb_{k(d-1)+1}, \calb_{kd-k+d+1}^n) = \LL_k(v) a^{(1)}_{n-kd+k-d}(v),
		\feq
		where $\LL_k(v) := \pp(\widetilde \calb_1, \cdots, \widetilde \calb_{k(d-1)+1})$. This gives the recursion \eqref{qqq1}. 
		
		To compute $\LL_k(v)$, we can proceed as follows. Let $$z:=(z_1\cdots, z_d, \cdots, z_{2d-1}, \cdots,
		z_{kd - (k-1)}, \cdots, z_{d(k+1)-k})$$
		be a permutation in $\cals_{(k+1)d-k}$ that is obtained from the standard reduction of an instance of the random
		sequence $Y_1\cdots Y_{d(k+1)-k}$ satisfying $\widetilde \calb_1\cap \cdots \cap \widetilde \calb_{k(d-1)+1}$ to a permutation. Since the number of choices for $z$ is $((k+1)d-k)! \LL_{k}(v)$, we can obtain the closed formula for $\LL_k(v)$ through an enumeration of $z$. We set $x_i:=z_{id+1}$.
		
		The condition $\widetilde \calb_1\cap \cdots \cap \widetilde \calb_{k(d-1)+1}$ dictates the following conditions on $(x_i)_{0\leq i \leq k+1}$ values:
		\begin{itemize}
			\item Since $v_1<v_d,$ we must have $x_{0}<x_{1}<\cdots<x_{k+1}$.
			\item Out of $(k+1)d-k$ elements in the vector $z$, at least $v_1-1$ of them are smaller than $x_0$ and at most $(d-v_1)(k+1)$ of them are larger than $x_0$. Hence,
			\beq
			v_1\leq x_0 \leq (k+1)v_1 - k.
			\feq
			\item Knowing $x_0$, the value of $x_{k+1}$ ranges between $x_0+(k+1)(v_d-1)$ and $(k+1)d-k-(d-v_d)=k(d-1)+v_d$.
			\item Knowing $x_0$ and $x_{k+1}$, for any fixed $1\leq i \leq k$, the value of $x_i$ ranges between $x_{i-1}+v_d-v_1$ and $x_{k+1} - (k+1-i)(d-v_1)$.
		\end{itemize}    
		
		We define the $i$-th block of $z$ as $z_{(i-1)d+2}\cdots z_{id}$, where $1\leq i\leq k+1$. To enumerate $z$ by parametrizing with respect to $x_0<x_1<\cdots<x_k<x_{k+1}$, we need to count the number of choices for each block of $z$ given that $x_{i-1}=z_{(i-1)d+1}$ and $x_i=z_{id+1}$ are fixed.  Then, we can use the following observations to count the number of choices for each block:
		\begin{itemize}
			\item There are exactly $v_d-v_1-1$ values in the $i$-th block that are between $x_{i-1}$ and $x_i$. Therefore, the number of possible choices for the $i$-th block is $\binom{x_i-x_{i-1}-1}{v_d-v_1-1}$.
			\item There are only $(i+1)d+(k-i)v_1-k-x_i$ values that can be assigned to $d-v_d-1$ elements of the $i$-th block that are larger than $x_i=z_{id+1}$. Therefore, the number of choices for these elements is $\binom{(i+1)d+(k-i)v_1-k-x_i}{d-v_d-1}$.
			\item Similarly, there are only $x_{i-1}-(v_d-1)i-1$ values that can be assigned to $v_1-1$ elements of the $i$-th block that are smaller than $x_{i-1}=z_{(i-1)d+1}$. Therefore, the number of choices for these elements is $\binom{x_{i-1}-(v_d-1)i-1}{v_1-1}$.
		\end{itemize}

		By collecting all the observations we have made so far, we obtain the following expression:
		\beqn \label{Ln_value}
		&& ((k+1)d-k)! \LL_{k}(v) = \sum_{x_0=v_1}^{(k+1)v_1-k}\ \  \sum_{x_{k+1}=x_0+(k+1)(v_d-1)}^{k(d-1)+v_d} \ \  \sum_{x_1=x_0+v_d-v_1}^{x_{k+1}-k(d-v_1)} \cdots \\
		&& \qquad \sum_{x_k=x_{k-1}+v_d-v_1}^{x_{k+1}-(d-v_1)} \prod_{i=1}^{k+1}\binom{x_{i} -
			x_{i-1}-1}{v_d-v_1-1}  \binom{(i+1)d+(k-i)v_1-k-x_i}{d-v_d-1} \binom{x_{i-1}-(v_d-1)i-1}{v_1-1}. \notag
		\feqn
		Hence, by combining \eqref{Ln_value} with \eqref{qqq1}, we can conclude that the proof is complete.
	\end{proof}

	We will continue with a similar line of argument and present the
	
	\begin{proof} [Proof of Proposition~\ref{non-overlapping-wilf-thm}]
		Firstly, we observe that for any $k\in\nn $ and $n\geq k$, the term $a_n^{(k)}(v)$ can be expressed as
		\beq
		a^{(k)}_n(v):=\pp(\cup_{1\leq i_1<\cdots<i_{k-1}\leq n} \calb_{1}^{i_1-1}\cap \widetilde \calb_{i_1}\cap \calb_{i_1+1}^{i_2-1}\cap\cdots \cap \widetilde \calb_{i_{k-1}}\cap \calb_{i_{k-1}+1}^{n}),
		\feq
		where $a_0^{(k)} = 1.$  Next, for $n\geq 2$, we can write 
		\beqn \label{sum_c}
		a^{(k)}_{n-1}(v) - a^{(k)}_{n}(v) = \sum_{1\leq i_1 < \cdots < i_{k-2}\leq n-1} \pp(\calb_{1}^{i_1-1}, \widetilde \calb_{i_1}, \calb_{i_1+1}^{i_2-1}, \cdots, \widetilde \calb_{i_{k-2}}, \calb_{i_{k-2}+1}^{n-1}, \widetilde \calb_{n}).
		\feqn
		We denote the term in the above summation by $c_{i_1,\cdots,i_{k-2}}^{(k-1)}$. Since $v$ is non-overlapping, i.e. $\calo_v=\{1\},$ then $\widetilde \calb_j$ implies that the event $\calb_{j+1}^{j+d-2}$ holds
		true for any $j$. Therefore, we must have $i_{s}-i_{s-1}\geq d-1$ for all $2\leq s \leq k-2$. If $i_{s}-i_{s-1}>d$ for some $2\leq s \leq k-2$, then by the independence of events that are separated by more than $d-1$ variables, we can write  $\pp(\widetilde \calb_{i_1}, \cdots, \widetilde \calb_{i_{k-2}})$
		as $\pp(\widetilde \calb_{i_{1}}, \cdots, \widetilde \calb_{i_{s-1}})\pp(\widetilde \calb_{i_{s}}, \cdots, \widetilde \calb_{i_{k-2}})$. The
		arrangement of these products is independent of the shape of $v$ and we only used the fact that $v$ is
		non-overlapping. After applying the argument repeatedly, we obtain that $c_{i_1,\cdots, i_{k-2}}^{(k-1)}$ is a product of $\LL_k(v)$ terms defined by \eqref{Ln_value}.
		
		Now, suppose that $v$ and $w$ are non-overlapping and c-Wilf-equivalent. Therefore, by Theorem \ref{non-overlapping-bound-thm}, we have $\LL_k(v) = \LL_k(w)$ for all $k\geq 2$, and consequently, $c^{(k-1)}_{i_1,\cdots, i_{k-1}}(v) = c^{(k-1)}_{i_1,\cdots, i_{k-1}}(w)$ for all $k\geq 2$. The proof follows from this fact and an obvious inductive argument based on \eqref{sum_c}.
	\end{proof}

	We now turn to the 
	
	\begin{proof} [Proof of Theorem~\ref{non-monotone-bound-thm}] 
		To prove the theorem, we will show that it holds true for $v=12\cdots d$. This same reasoning applies to $v=d\cdots 1$ due to symmetry. It is important to note that for any $s\in \mathbb{N}$, the event $\widetilde \calb_s \cap \calb_{s+1}$ is equivalent to $\widetilde \calb_s \cap \{Y_{s+d}<Y_{s+d-1}\}$. Furthermore, this event guarantees the occurrence of the event $\calb_{s+2}^{s+d-1}$.  By utilizing this simple observation, we can write the following expression for $n>d$:
		\beq
		&& a_{n-1}^{(1)}(v)-a_n^{(1)}(v) = \pp(\calb_2, \calb_3^n) -  \pp(\calb_1, \calb_2, \calb_3^n) \\
		&& \quad = \pp(\widetilde \calb_1, \calb_2, \calb_3^n) = \pp(\widetilde \calb_1, Y_{d+1}<Y_{d}, \calb_{d+1}^n).
		\feq 
		We will now apply the same trick we used for $\calb_1$ to $\calb_{d+1}$. As a result, the right-hand side of the aforementioned equation can be expressed as:
		\beq
		&&  \pp(\widetilde \calb_1, Y_{d+1}<Y_{d}, \calb_{d+2}^n) - \pp(\widetilde \calb_1,
		Y_{d+1}<Y_{d}, \widetilde \calb_{d+1}, \calb_{d+2}, \calb_{d+3}^n) \notag \\
		&& \quad = \pp(\widetilde \calb_1, Y_{d+1}<Y_{d}) \pp(\calb_{d+2}^n) - \pp(\widetilde \calb_1, Y_{d+1}<Y_{d}, \widetilde \calb_{d+1}, Y_{2d+1}<Y_{2d},
		\calb_{2d+1}^n)
		\feq
		By repeating this line of reasoning, we can arrive at the following expression:
		\beq
		&&  a_{n-1}^{(1)}(v)-a_n^{(1)}(v) = \sum_{k=1}^{m} (-1)^{k-1}\pp(\widetilde \calb_1, Y_{d+1}<Y_{d}, \widetilde \calb_{d+1},\cdots, \widetilde \calb_{(k-1)d+1},
		Y_{kd+1}<Y_{kd}) \pp(\calb_{kd+2}^n) \notag \\
		&& \qquad + (-1)^{m}\pp(\widetilde \calb_1, Y_{d+1}<Y_{d}, \widetilde \calb_{d+1},\cdots, Y_{md+1}<Y_{md}, \widetilde \calb_{md+1}),
		\feq
		where $m=\floor{\frac{n-1}{d}}$. Using stationarity, we can express the equation as follows:
		\beq
		&&a_{n}^{(1)}(v)-a_{n-1}^{(1)}(v) + \sum_{k=1}^m (-1)^{k-1} M_k a_{n-kd-1}^{(1)}(v)  + (-1)^m \widetilde M_m = 0,
		\feq
		Here, for $1\leq k \leq m$, we define $M_k$ as:
		\beq
		M_k := \pp(\widetilde \calb_1, Y_{d+1}<Y_{d}, \widetilde \calb_{d+1},\cdots, \widetilde \calb_{(k-1)d+1}, Y_{kd+1}<Y_{kd}).
		\feq
		Additionally, we define $\widetilde M_m$ as:
		\beq
		\widetilde M_m := \pp(\widetilde \calb_1, Y_{d+1}<Y_{d}, \widetilde \calb_{d+1},\cdots,  Y_{md+1}<Y_{md}, \widetilde \calb_{md+1}).
		\feq
		To proceed, we will derive a closed-form expression for $M_k$ in terms of $k$. Consider a permutation $z=(z_1,\ldots,z_{kd+1})$ obtained from the standard reduction of an instance of the random sequence $Y_1,\ldots,Y_{kd+1}$ satisfying $\widetilde \calb_1, Y_{d+1}<Y_{d}, \cdots, \widetilde \calb_{(k-1)d+1}, Y_{kd+1}<Y_{kd}$ to a permutation. Let $y_i=z_{id}$ and $x_i=z_{id+1}$, so that the pairs $(y_i,x_i)$ correspond to adjacent entries in the permutation. The conditions $\widetilde \calb_1\cap\{Y_{d+1}<Y_d\}\cap \cdots \cap \widetilde \calb_{k(d-1)+1}\cap\{Y_{kd+1}<Y_{kd}\}$ impose the following constraints on the values  $(z_i)_{1\leq i \leq
			k(d-1)+d}$:
		\begin{itemize}
			\item $y_{i}<x_{i}$.
			\item $x_i<y_{i+1}$.
		\end{itemize}
		With these properties in mind, we can count the number of possible sequences $(z_i)_{1\leq i \leq kd+1}$  by parametrizing them with respect to $y_{1}<x_1<y_2<\cdots<y_k<x_k$. This count is equal to $(kd+1)! M_{k}.$  For $k=1$ we have
		\beqn  \label{M_k_def1}
		(d+1)! M_1 = \sum_{y_1=d}^{d+1} (y_1-1) \binom{y_1-1}{d-1}.
		\feqn
		For $k\geq 2$, we have
		\beqn  \label{M_k_def2}
		&& (kd+1)! M_k = \sum_{y_1=d}^{d+1} \binom{y_1-1}{d-1} \sum_{x_1=1}^{y_1-1} \sum_{y_2=x_1+d-2}^{d+1} \binom{y_2-x_1-1}{d-2} \sum_{x_2=1}^{y_2-1} \times \\
		&& \qquad \times \cdots\sum_{y_k=x_{k-1}+d-2}^{d+1} \binom{y_k-x_{k-1}-1}{d-2} \sum_{x_k=1}^{y_k-1}1. \notag 
		\feqn    
		Similarly,
		\beqn  \label{M_k_def3}
		&& (md+1)! \widetilde M_m = \sum_{y_1=d}^{d+1} \binom{y_1-1}{d-1} \sum_{x_1=1}^{y_1-1} \sum_{y_2=x_1+d-2}^{d+1} \binom{y_2-x_1-1}{d-2} \sum_{x_2=1}^{y_2-1} \times \\
		&& \qquad \times \cdots\sum_{x_m=1}^{y_m-1}\sum_{y_m=x_{m-1}+d-2}^{d+1} \binom{y_m-x_{m-1}-1}{d-2}.\notag
		\feqn
		This completes the proof.    
	\end{proof}

	\subsection{Bounds on the growth} \label{growth-bound-sec}
	As mentioned in the introduction, obtaining a linear recursion for the general pattern $v$ appears to be challenging, if not impossible. However, we can still derive linear recursion inequalities to obtain bounds.
	
	\begin{proof}[Proof of Theorem~\ref{war-thm}-(a)]
		Let $a_n:=\pp(\calb_1^n)$ and $\beta_{n-1} := \pp(\calb_1^{n-1}, \widetilde \calb_n)$. We can write the following equation:
		
		\beqn \label{main_eq}
		a_n = \pp(\calb_1^n) = \pp(\calb_1^{n-1}) - \pp(\calb_1^{n-1}, \widetilde \calb_n) = a_{n-1} - \beta_{n-1}, \qquad n\geq 2.
		\feqn
		The initial condition is $a_1=1-\frac{1}{d!}.$ 
		To turn this equation into an inequality with homogeneous linear recurrence relations, we aim to bound the term $\beta_{n-1}$ with a linear sum of terms $a_{i}$s. In pursuit of this objective, for any $\ell\geq 2$ and any $n\geq \ell+d$, we write
		\beq
		\beta_{n-1} &=& \pp( \calb_1^{n-1}, \widetilde \calb_n ) = \pp\left( \calb_1^{n-\ell}, \calb_{n-\ell+1}^{n-1}, \widetilde \calb_n \right) \\
		&\leq& \pp\left( \calb_1^{n-\ell-d+1}, \calb_{n-\ell+1}^{n-1}, \widetilde \calb_n \right) \\
		&=& \pp\left( \calb_{1}^{n-\ell-d+1}\right) \pp\left(\calb_{n-\ell+1}^{n-1}, \widetilde \calb_n \right) \\
		&=& a_{n-d-\ell+1} \beta_{\ell-1},
		\feq
		where we utilize the independence property of $\calb_1^{n-\ell-d+1}$ and $\calb_j$ with $j\geq n-\ell+1$ for the third line. The incorporation of this expression into equation \eqref{main_eq} establishes the lower bound.\\
		
		In order to obtain the upper bound, we further assume $n> \ell+2d$. Then, we can express $\beta_{n-1}$ as follows:
		\beqn
		\beta_{n-1} &=& \pp\left( \calb_{1}^{n-\ell}, \calb_{n-\ell+1}^{n-1}, \widetilde \calb_n \right) \notag \\
		&=& \pp\left( \calb_{1}^{n-\ell-d+1}, \calb_{n-\ell+1}^{n-1}, \widetilde \calb_n \right) \label{l_in_t1} \\
		&& - \pp\left( \calb_{1}^{n-\ell-d+1},  \widetilde{\calb_{n-\ell-d+2}^{n-\ell}}, \calb_{n-\ell+1}^{n-1}, \widetilde \calb_n \right). \label{l_in_t2}
		\feqn
		We note that \eqref{l_in_t1} can be further expressed as
		\beqn
		\pp\left( \calb_{1}^{n-\ell-d+1}, \calb_{n-\ell+1}^{n-1}, \widetilde \calb_n \right) &=& \pp\left( \calb_{1}^{n-\ell-d+1} \right) \pp\left(\calb_{n-\ell+1}^{n-1},
		\widetilde \calb_n \right) \notag \\
		&=& a_{n-\ell-d+1}\beta_{\ell-1}. \label{l_in_t4}
		\feqn    
		For the term \eqref{l_in_t2}, we observe that
		\beqn
		&& \pp\left( \calb_{1}^{n-\ell-d+1}, \widetilde{\calb_{n-\ell-d+2}^{n-\ell}}, \calb_{n-\ell+1}, \widetilde \calb_n \right)\notag\\
		&& \quad \leq \pp\left( \calb_{1}^{n-\ell-2d+2}, \widetilde{\calb_{n-\ell-d+2}^{n-\ell}}, \calb_{n-\ell+1}, \widetilde \calb_n \right) \notag\\&& \quad = \pp\left( \calb_{1}^{n-\ell-2d+2}\right) \pp\left(\widetilde{\calb_{n-\ell-d+2}^{n-\ell}} \right) \pp\left(\calb_{n-\ell+1},
		\widetilde \calb_n, \widetilde{\calb_{n-\ell-d+2}^{n-\ell}} \right) \notag\\
		&& \quad \leq  \frac{d-1}{d!} \pp\left( \calb_{1}^{n-\ell-2d+2} \right) = \frac{d-1}{d!} a_{n-\ell-2d+2}, \label{l_in_t3}
		\feqn
		where for the last inequality we applied 
		\beq
		\pp\left(\widetilde{\calb_{n-\ell-d+2}^{n-\ell}} \right) = \pp\left(\cup_{i=n-\ell-d+2}^{n-\ell} \widetilde \calb_i \right) \leq
		\sum_{i=n-\ell-d+2}^{n-\ell} \pp(\widetilde \calb_i) = \frac{d-1}{d!}.
		\feq
		We utilize the inequalities \eqref{l_in_t3} and \eqref{l_in_t4} to arrive at the following inequality:
		\beq
		\beta_{n-1} \geq \beta_{\ell-1} a_{n-\ell-d+1} - \frac{d-1}{d!} a_{n-\ell-2d+2}.
		\feq
		By inserting the above inequality back into \eqref{main_eq}, we have successfully completed the proof of the upper bound.
	\end{proof}
	
	Next, we will derive an additional bound by presenting the
	
	\begin{proof}[Proof of Theorem~\ref{growth_theorem}-(a)]
		
		We fix $\alpha \in \mathbb{N}$ and decompose the sequence $(Z_n)$ into equal blocks of length $\alpha d$. In what follows, we repeatedly utilize the stationarity of the sequence $(\calb_n)_{n\in \nn}$ and the independence of the blocks $\calb_n$ and $(\calb_{n+d+i})_{i\in \nn_0}$, for all $n\in \nn$. Let $\lambda := 1+d(\alpha-1)$. We first express
		\beqn
		&&  \left|\pp(\calb_1^{\alpha kd}) - \pp(\calb_1^{\lambda})^{k} \right| = \notag \\
		&& = \left| \sum_{j=0}^{k-1} \pp(\calb_1^{\lambda})^j \left(\pp(\calb_1^{\alpha d(k-j)}) - \pp(\calb_1^{\lambda}) \pp(\calb_1^{\alpha
			d(k-j-1)})\right) \right| \notag \\
		&& = \sum_{j=0}^{k-2} \pp(\calb_1^{\lambda})^j \left|\pp(\calb_1^{\alpha d(k-j)}) - \pp(\calb_1^{\alpha d(k-j-1)}, \calb_{\alpha d(k-j) -
			\lambda + 1 }^{\alpha d(k-j)})\right| \label{sum_2_T} \\
		&& \quad + \pp(\calb_1^{\lambda})^{k-1} \left| \pp(\calb_1^{\alpha d}) - \pp(\calb_1^{\lambda})\right|, \label{sum_3_T}
		\feqn
		where \eqref{sum_2_T} is obtained by using the independence of $\calb_1^{\alpha d(k-j-1)}$ and $\calb_{\alpha d(k-j)-\lambda+1}^{\alpha d(k-j)+1}$ for each $0\leq j \leq k-2$ and the stationarity.
		
		The above expression in equation \eqref{sum_2_T} can be bounded as follows:
		\beq
		&& \left|\pp(\calb_1^{\alpha d(k-j)}) - \pp(\calb_1^{\alpha d(k-j-1)}, \calb_{\alpha d(k-j)-\lambda +1}^{\alpha d(k-j)}) \right| \\&& \quad = \pp\left( \calb_1^{\alpha d(k-j-1)}, \widetilde{\calb_{\alpha d(k-j-1)+1}^{\alpha d(k-j)-\lambda}}, \calb_{\alpha
			d(k-j)-\lambda +1}^{\alpha d(k-j)}\right) \label{T_com} \\
		&& \quad \leq \pp\left( \cap_{s=0}^{k-j-2}\{ \calb_{\alpha ds+d}^{\alpha ds+\alpha d} \},\widetilde{\calb_{\alpha
				d(k-j-1)+1}^{\alpha d(k-j)-\lambda }}, \calb_{\alpha d(k-j)-\lambda+1}^{\alpha d(k-j)} \right) \\
		&& \quad \leq \pp\left( \cap_{s=0}^{k-j-2}\{ \calb_{\alpha ds+d}^{\alpha ds+\alpha d} \}, \calb_{\alpha d(k-j)-\beta+1}^{\alpha
			d(k-j)} \right) \\
		&& \quad = \pp\left( \calb_1^{\lambda} \right)^{k-j-1} \pp \left( \calb_{\alpha d (k-j)-\lambda+1}^{\alpha d (k-j)} \right) \\
		&& \quad = \pp\left( \calb_1^{\lambda} \right)^{k-j}.
		\feq
		This shows that the summation in equation \eqref{sum_2_T} is bounded by $k\pp(\calb_1^{\lambda})^{k}.$
		The rightmost factor of equation \eqref{sum_3_T} can be bounded above by two. Combining these bounds yields
		\beq
		\left|\pp(\calb_1^{\alpha kd}) - \pp(\calb_1^{\lambda})^{k}\right| &\leq&  k \pp(\calb_1^{\lambda})^{k} +  2\pp(\calb_1^{\lambda})^{k-1}  \\    &\leq& (k+2) \pp(\calb_1^{\lambda})^{k-1}.
		\feq
		This establishes the validity of \eqref{mineq}. Finally, as the limit $(a_n^{(1)})^{\frac{1}{n}}$ exists as $n$ approaches infinity, we can pass the limit to a subsequence and obtain
		\beq
		&&  \rho_v := \lim_n (a^{(1)}_n(v))^{\frac{1}{n}}  = \lim_{k} \pp(\calb_1^{\alpha k d})^{\frac{1}{\alpha k d}} \\
		&& \quad \leq \lim_k \left (k+2 + \pp(\calb_1^{\lambda}) \right)^{\frac{1}{\alpha k d}} \pp(\calb_1^{\lambda})^{\frac{k-2}{\alpha k
				d}}  \\
		&& \quad = a^{(1)}_{\lambda}(v)^{\frac{1}{\alpha d}} = \left( \frac{g^v_{0}(\cals_{\alpha d})}{(\alpha
			d)!}\right)^{\frac{1}{\alpha d}}.
		\feq
		This concludes the proof of the inequality \eqref{rho_bound}.
	\end{proof}
	
	Lastly, we present another set of bounds on the growth rate by providing the
	
	\begin{proof}[Proof of Theorem~\ref{growth_theorem}-(b)]
		The first part of this result follows directly from Theorem~\ref{war-thm}. However, to establish the lower bound, we must determine a lower bound for the dominant root of the equation \eqref{poly1}. To this end, we define $m=d+\ell-1$ and consider the function $f(x)=x^{m}-x^{m-1}-\beta_{\ell-1}$. The roots of the derivative of $f$ are $0$ with multiplicity $m-2$ and $\frac{m-1}{m}$ with multiplicity $1$. Since $\beta_{\ell-1}>0$, $0$ cannot be a root of $f$ and we have
		\beq
		f\left( \frac{m-1}{m} \right) = -\frac{(m-1)^{m-1}}{m^m} + \beta_{\ell-1}. 
		\feq
		Therefore, $\frac{m-1}{m}$ is a root of $f$ with multiplicity $2$ if and only if $\beta_{\ell-1}= \frac{(m-1)^{m-1}}{m^m}$.
		
		Since $\ell$ is chosen such that $\beta_{\ell-1} \neq \frac{(m-1)^{m-1}}{m^m}$, then $f(.)$ has $m$ roots $\lambda_1,\cdots \lambda_{m}$ each with
		multiplicity one. Thus, by \eqref{rho_bound} we can find constants $q_i$s independent of $n$ so that $a_n^{(1)}(v)$ is
		bounded below by
		\beq
		a_n^{(1)}(v) \geq \sum_{i=1}^{m} q_i \lambda_i^n.
		\feq
		We refer to the dominant root by $\rho_{v,\ell}$. Suppose $g(x)=x^{m}-x^{m-1}$ and observe that $f(x)-g(x)=\beta_{\ell-1}$ for
		all $x\in \cc.$ We apply Rouch\'e's Theorem (See \cite{silver1}, page 262) to locate the roots of $f(x)$ by understanding the roots of
		$g(x).$ 
		
		We need to find a simple closed contour, denoted by $C$, such that the inequality 
		\beqn \label{rouch_lab}
		|g(x)| > |f(x)-g(x)|
		\feqn
		holds for all $x$ on $C$. 
		According to  Rouch\'e's Theorem, if such a contour $C$ exists, then $f(x)$ and $g(x)$ must have the same number of roots (counting multiplicities) inside $C$. To satisfy the condition in equation \eqref{rouch_lab} on the boundary of $C$, we choose $C$ to be a circle with an appropriate radius $\delta$ centered at the origin. We need to ensure that $\delta<1$ so that the following inequality holds true:
		\beq
		|g(x)| &=& |x^m - x^{m-1}| \geq | |x|^m - |x|^{m-1}| = | \delta^m - \delta^{m-1} | \\
		&=&  \delta^{m-1} - \delta^m > \beta_{\ell-1} = |f(x) - g(x)|.
		\feq
		We can find such $\delta<1$ by choosing $\delta = \left(\beta_{\ell-1}\right)^{\frac{1}{(m-1)(1+\alpha)}}$ for some $\alpha>0$. Substituting this into the inequality above, we get:
		\beq
		1 < \left( 1 - \left(\beta_{\ell-1}\right)^{\frac{1}{(m-1)(1+\alpha)}}\right) \left( \frac{1}{\beta_{\ell-1}}\right)^{\frac{\alpha}{1+\alpha}}.
		\feq
		A straightforward calculation shows that there exists $\gamma>1$ such that
		\beq
		1-\beta_{\ell-1} - \left(\beta_{\ell-1}\right)^{\gamma} \leq \delta = \left(\beta_{\ell-1}\right)^{\frac{1}{(m-1)(1+\alpha)}}.
		\feq
		Since $0$ is a root of $g(x)$ with a multiplicity of $m-1$, $f(x)$ also has $m-1$ roots inside $C$, and the remaining root is not inside $C$. This root must be real, since otherwise, its conjugate would also be a root of $f(x)$ outside of $C$, which would be a contradiction. Moreover, this root is also the dominant root $\rho_{v,l}$ since it is farthest from the origin. Therefore, we have $1-\beta_{\ell-1} - \beta_{\ell-1}^{\gamma} \leq \delta\leq \rho_{v,l}$, as intended.
	\end{proof}
	
	\section{Consecutive containment in words} \label{word_sec}
	
	We will now begin our analysis concerning consecutive containment in words. At the outset, we leverage analogous argument with the same essence as of those in the previous section. To that goal, fix the integer number $\malp>d$, and define an i.i.d sequence of experiments $W:=(W_n)_{n\in \nn}$ with each $W_n$ sampled independently and uniformly from $[\malp].$  We will denote by $\pp$ the probability law in a suitable probability
	space that supports all random variables considered in this section. Consider a discrete-time Markov process $(Z_n)_{n\in \nn}$ on the space $\alp{d}$, where $Z_n:=W_{n}\cdots W_{n+d-1}$. The transition probability of the process $Z_n$ is defined as
	\beq
	P(w, \cala) = \frac{1}{\malp}\#\{y\ | w_2\cdots w_d y\in \cala\}, \quad w:=w_1\cdots w_d\in \alp{d}, \quad \cala\subset \alp{d}.
	\feq
	Recall \eqref{a_h_def}. Since $W_1 \cdots W_n$ is a uniformly randomly sampled word from $\alp n$, then
	\beq
	h_{\malp,n}^{(r+1)}(v)=\frac{g_{r}^v(\alp{n+d-1})}{\malp^{n+d-1}} = \pp(\con_v(W_1\cdots W_{n+d-1})=r).
	\feq
	In the rest of this section, we will use $\cald_i(v)$ to denote the event $\{\red(Z_i)\neq v\}$. We define $\cald_{i}^j(v)$ as $\cap_{s=i}^j \cald_s(v)$ for $i\leq j$. When $i>j$, we define $\cald_{i}^j(v)$ as the entire set. We will drop the subscript $v$ whenever the context is clear. 
	
	We now present the 
	
	\begin{proof} [Proof of Theorem~\ref{non-overlapping-bound-thm}-(b)] The proof shares many similarities with that of Theorem~\ref{non-overlapping-bound-thm}-(a), but also includes several differences. Therefore, our focus will be on highlighting these differences while omitting the parts that are similar. Since $v$ is non-overlapping, we can write
		\beq
		h_{\malp,n}^{(1)}(v) = h_{\malp,n-1}^{(1)}(v) - \sum_{k=0}^{m} (-1)^{k}  \HH_{\malp, k}(v) h^{(1)}_{\malp,n-kd+k-d}(v),
		\feq
		where $\HH_{\malp, k}(v) := \pp(\widetilde \cald_1, \cdots, \widetilde \cald_{k(d-1)+1})$ and $m:=\ceil{\frac{n-1}{d-1}}$. To compute $\HH_{\malp, k}(v)$ we proceed as follows. Let $$w:=(w_1\cdots, w_d, \cdots, w_{2d-1}, \cdots,
		w_{kd - (k-1)}, \cdots, w_{d(k+1)-k})$$ be a word in $\alp{(k+1)d-k}$ sampled by the random
		sequence $W_1\cdots W_{d(k+1)-k}$ satisfying $\widetilde \cald_1\cap \cdots \cap \widetilde \cald_{k(d-1)+1}$. Since the number of choices for $w$ equals to $\malp^{(k+1)d-k} \HH_{\malp, k}(v),$ we obtain the formula for $\HH_k(v)$ through an enumeration of $w$. Set $x_i:=w_{id+1}$ and assume without loss of generality $v_1<v_d$. The
		assumption $\widetilde \cald_1\cap \cdots \cap \widetilde \cald_{k(d-1)+1}$ dictates the following conditions on $(x_i)_{0\leq i \leq k+1}$ values: \par
		
		\begin{itemize}
			\item We must have $x_{0}<x_{1}<\cdots<x_{k+1}$. This immediately implies that $\HH_k(v)=0$ for all $k > \malp$.
			\item Out of $(k+1)d-k$ elements in the vector $w$, at least $v_1-1$ of them are smaller than $x_0$ and at most $(d-v_1)(k+1)$ of them are larger than $x_0$.  Hence,
			\beq
			v_1\leq x_0 \leq \malp -(d-v_1)(k+1).
			\feq
			\item Knowning $x_0$, the value of $x_{k+1}$ ranges between $x_0+(k+1)(v_d-1)$ and $\malp-(d-v_d)=\malp+v_d-d$. 
			\item Knowning $x_0$ and $x_{k+1}$, for any fixed $1\leq i \leq k$, the value of $x_i$ ranges between $x_{i-1}+v_d-v_1$ and $x_{k+1} - (k+1-i)(d-v_1)$.
		\end{itemize}
		Next, we enumerate $w$ by parametrizing with respect to $x_{0}<x_{1}<\cdots<x_{k}<x_{k+1}$. To that end, for any $1\leq i\leq k+1$, let us define $i$-th blocks of $w$ as $w_{(i-1)d+2}\cdots w_{id}$. For a fixed $1\leq i \leq k+1,$ we are interested in finding the number of choices for the $i$-th block when $x_i=w_{id+1}$ and $x_{i-1}=w_{(i-1)d+1}$ values are given. Observe that
		\begin{itemize}
			\item There are exactly $v_d-v_1-1$ values in the $i$-th block that are between $x_{i-1}$ and $x_i$. The number possible choices for $i$-th block is therefore
			\beq
			\binom{x_{i}-x_{i-1}-1}{v_d-v_1-1}.
			\feq
			\item There are only $(\malp-x_{i-1})-(k-i)(d-v_1-1)-1$ values that can be assigned to $d-v_d-1$ elements of the $i$-th block that are larger than $x_{i}=z_{id+1}$. The number choices are
			\beq
			\binom{(\malp-x_{i-1})-(k-i)(d-v_1-1)-1}{d-v_d-1}.
			\feq
			\item Similarly, there are only $x_i-1-(i-1)(v_d-2)-(i-1)=x_{i-1}-(v_d-1)i-1$ values that can be assigned to $v_1-1$ elements of the $i$-th block that are smaller than $x_{i-1}=z_{(i-1)d+1}.$ The number of choices are 
			\beq
			\binom{x_{i-1}-(v_d-1)i-1}{v_1-1}.
			\feq
		\end{itemize}
		By collecting all the observations we have made so far, using the convention that $\sum_{i}^j . =0$ for $j<i$, we arrive at the following expression:
		\beqn \label{Ln_value_word}
		&& \malp^{(k+1)d-k} \HH_{\malp, k}(v) = \sum_{x_0=v_1}^{(k+1)v_1-k}\ \  \sum_{x_{k+1}=x_0+(k+1)(v_d-1)}^{k(d-1)+v_d} \ \  \sum_{x_1=x_0+v_d-v_1}^{x_{k+1}-k(d-v_1)} \cdots \sum_{x_k=x_{k-1}+v_d-v_1}^{x_{k+1}-(d-v_1)} \\
		&& \qquad \quad \prod_{i=1}^{k+1}\binom{x_{i} -
			x_{i-1}-1}{v_d-v_1-1}  \binom{(\malp-x_{i-1})-(k-i)(d-v_1-1)-1}{d-v_d-1} \binom{x_{i-1}-(v_d-1)i-1}{v_1-1}. \notag
		\feqn
		Therefore, \eqref{www1} together with \eqref{Ln_value_word} completes the proof.
	\end{proof}
	
	\subsection{Connection with a classical probability problem} \label{li_section}
	
	In this section, our focus is on proving Theorem \ref{words_thm}. To achieve this goal, we will draw a connection to a classical probability problem that has been extensively studied. However, before we do so, we need to define the framework and review some relevant existing results. In \cite{li1}, Li proposed a martingale method for studying the occurrence of a set of predefined sequences of observations (patterns) in a sequence of independently repeated experiments. This method involves describing a gambling game among multiple teams and forming a system of equations that relate the expected waiting time until any consecutive observation sequence appears to the probability that one of the patterns precedes the remaining ones. Gerber and Li \cite{li2} extended this work by using a Markov chain embedding technique to calculate the generating functions of waiting time distributions for patterns. The martingale method was further refined by Pozdnyakov et al. \cite{steel1} and by Zajkowski \cite{kr1}. It should be noted that the concept of a pattern in these papers differs slightly from our definition of a pattern in this paper; Li's ``pattern" refers to what we refer to as a ``pattern instance." Here, we present the framework in the style used in \cite{kr1}. We remark that the original framework is more general; however, for simplicity, we simplify the framework to suit our purposes.
	
	To establish context, let us define a correlation value between two words of equal length that is crucial to our analysis. Specifically, for any $\alpha$ in the interval $(0,1)$ and any two words $u$ and $s$ in $\alp d$, the $\alpha$-correlation of $u$ and $s$ is defined as:
	
	\beqn \label{cor_def}
	(u*s)_{\alpha, \malp} = \sum_{i=1}^d \left(\frac{\malp}{\alpha}\right)^i \one{u[d-i+1,d] = s[1,i]},
	\feqn
	where we use the notation $s[i,j]:=s_i\cdots s_j$ to denote any substring of the word $s:=s_1\cdots s_d$, where $1\leq i\leq j\leq d$. 
	
	Suppose $\cali$ be a set of words in $\alp d$. We define various notions of hitting times for the set $\cali$. Throughout our proofs, we may use different values for $\cali$ and may adopt different subscripts or postscripts in the notations to maintain focus on the context or simplify the notation. For any two words $t$ and $s$, we define
	\beq
	\tau(t,s) := \inf \{ n\in \nn \ | \  s \mbox{ suffixes the sequence } t_1\cdots t_d W_{1}\cdots W_n \}
	\feq
	to be the first time that the sequence $u_1\cdots u_\ell W_{1}\cdots W_n$ ends with the word $s$ as a postfix. Then, for any given word $t$, we define     
	\beq
	T^{(1)}(t) := \inf_{s\in \cali} \tau(t, s).
	\feq
	At $T^{(1)}(.)$, the visited word can be any of the words in $\mathcal{I}$. Thus, we introduce $Q(.,s)$ to denote the probability that $s$ is the specific instance. More formally, we set $Q(t,s)$ as the probability that $T^{(1)}(t) = \tau(t, s)$, that is:
	\beq
	Q(t,s):= \pp(T^{(1)}(t) = \tau(t, s)).
	\feq
	We will now explain how to obtain information about the hitting time and hitting probability. For this purpose, we will follow the framework of \cite{kr1}, but interested readers can find more detailed discussions in \cite{li2, li1, poz1, steel1}. Recall the definition of $\alpha$-correlation from \eqref{cor_def}. Using Proposition 4.1 from \cite{kr1}, we obtain the following equation:
	\beqn
	\ee(\alpha^{T^{(1)}(t)}) + \sum_{s\in \cali} (s*q)_\alpha \ee\left(\alpha^{T^{(1)}(t)} \one{T^{(1)}(t) = \tau(t,s)}\right) = (t*q)_{\alpha, k}, \qquad \forall q\in \cali. \label{T_1_system}
	\feqn
	We also have the trivial equation
	\beqn \label{T_1_system_2}
	\ee(\alpha^{T^{(1)}(t)}) = \sum_{s\in \cali_v(k)} \ee\left(\alpha^{T^{(1)}(t)}\one{T^{(1)}(t) = \tau(t, s)}\right).
	\feqn    
	To express the solution of the system \eqref{T_1_system} and \eqref{T_1_system_2}, we construct a correlation matrix $R(\alpha)$ for $\mathcal{I}$, which contains $\alpha$-correlation values among the words in $\cali$. For any pair $(u,s)\in \cali \times \cali$, the $(u,s)$-entry of the matrix, denoted by $[R(\alpha)]_{u, s}$, is defined as follows:
	\beqn \label{R_def}
	[R(\alpha)]_{u, s} := (u*s)_{\alpha, k}.
	\feqn
	
	Suppose $u,s\in \cali$ and $t\in \alp d$. We define $R_{u,t}(\alpha)$ (resp. $R^{u}(\alpha)$) to be the matrix attained by replacing the column (resp. the row) corresponding to $u$ in $R(\alpha)$ with the column (resp. the row vector) $((t*u)_\alpha)_{s\in \cali}$ (resp. $(1)_{s\in \cali}$). $R_{s,t}^u(\alpha)$ is obtained by replacing the $u$-th column of $R_{s,t}(\alpha)$ with the column vector of units $(1)_{u\in \cali}$. The solution of the system \eqref{T_1_system} and \eqref{T_1_system_2} can be described in terms of $R(\alpha)$ as defined by \eqref{R_def}, $R^u(\alpha)$, and $R_{u,t}(\alpha)$. More specifically,  Proposition 4.1, \cite{kr1} states 
	\beqn \label{alpha_T_gen_form}
	\ee(\alpha^{T^{(1)}(t)}) = 1 - (1-\alpha) \frac{\det R(\alpha) - \sum_{u\in \cali} \det R_{u,t}(\alpha)}{(1-\alpha)\det
		R(\alpha) + \sum_{u\in \cali} \det R^u(\alpha)},
	\feqn
	and
	\beqn \label{alpha_T_u_gen_form}
	\ee\left(\alpha^{T^{(1)}(t)} \one{T^{(1)}(t) = \tau(t, s)} \right) = \frac{(1-\alpha)\det R^{s}(\alpha) + \sum_{u\in \cali} \det
		R_{s,t}^u(\alpha) }{(1-\alpha)\det R(\alpha) + \sum_{u\in \cali} \det R^u(\alpha)}.
	\feqn
	As discussed in Remark 4.2 of \cite{kr1}, we can greatly simplify these solutions by substituting $t=\epsilon$ and using the identity $(\epsilon*s)_\alpha=0$. Let us define $\tau(u)$ to be $\tau(\epsilon, u)$ and set $T^{(1)}:=T^{(1)}(\epsilon)$. Then, 
	\beq
	\ee(\alpha^{T^{(1)}}) = 1- (1-\alpha) \frac{\det R(\alpha)}{(1-\alpha)\det R(\alpha) + \sum_{u\in \cali} \det
		R^u(\alpha)}
	\feq
	and 
	\beq
	\ee\left(\alpha^{T^{(1)}} \one{T^{(1)} = \tau(u)} \right) = \frac{\det R^u(\alpha) }{(1-\alpha)\det R(\alpha) + \sum_{s\in \cali}
		\det R^s(\alpha)}.
	\feq
	Next, we define the sequences of random variables $(T^{(r)})_{r\geq 1}$ and $(T^{(r)}(u))_{r\geq 1}$ as follows. For $r\geq 2$, we set
	\beqn \label{T_I_def}
	T^{(r)} := \inf \{ n > T^{(r-1)} \ | \ W_{n-d+1}\cdots W_n\in \cali \},
	\feqn 
	and
	\beq
	T^{(r)}(u) := \inf \{ n > T^{(r-1)}(u) \ | \ W_{n-d+1}\cdots W_n \in \cali \}.
	\feq
	A straightforward calculation yields the following lemma, which is the main ingredient in the proof of Theorem \ref{words_thm}.
	
	\begin{lemma} \label{lemma_Ti}
		For any $r\in\nn,$ $\ee(\alpha^{T^{(r)}})$ depends only on the matrix $R(\alpha)$s, $R^u(\alpha)$s,
		$R_{s,t}(\alpha)$s, and $R_{s,t}^u(\alpha)$s.
	\end{lemma}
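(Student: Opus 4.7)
The plan is to prove the lemma by induction on $r$, using the strong Markov property of the i.i.d.\ sequence $(W_n)$ to split $\alpha^{T^{(r)}}$ into past and residual factors. Since the state of the chain at every hitting time is a word in $\cali$, I will track the refined joint quantity
\beq
V^{(r)}(s) := \ee\left(\alpha^{T^{(r)}} \one{W_{T^{(r)}-d+1}\cdots W_{T^{(r)}}=s}\right), \qquad s\in\cali,
\feq
and recover the lemma from the identity $\ee(\alpha^{T^{(r)}}) = \sum_{s\in\cali} V^{(r)}(s)$.

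The base case $r=1$ is immediate: $V^{(1)}(s)$ equals $\ee(\alpha^{T^{(1)}} \one{T^{(1)}=\tau(s)})$, and by \eqref{alpha_T_u_gen_form} evaluated at $t=\epsilon$ (where $(\epsilon * u)_{\alpha,\malp}=0$ collapses the $\det R^{u}_{s,\epsilon}(\alpha)$ terms), this reduces to $\det R^s(\alpha)/\left[(1-\alpha)\det R(\alpha) + \sum_{v\in\cali} \det R^v(\alpha)\right]$, an expression in $R(\alpha)$ and the $R^v(\alpha)$ alone. For the inductive step, I would apply the strong Markov property at the stopping time $T^{(r-1)}$: on $\{W_{T^{(r-1)}-d+1}\cdots W_{T^{(r-1)}} = u\}$ for $u\in\cali$, the shifted sequence $(W_{T^{(r-1)}+n})_{n\geq 1}$ is an independent i.i.d.\ copy of $(W_n)$, and the pair (residual waiting time, next word hit in $\cali$) has the joint law of $(T^{(1)}(u), s)$. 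This produces the recursion
\beq
V^{(r)}(s) = \sum_{u\in\cali} V^{(r-1)}(u)\cdot \ee\left(\alpha^{T^{(1)}(u)} \one{T^{(1)}(u)=\tau(u,s)}\right).
\feq

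Each factor on the right-hand side is, by \eqref{alpha_T_u_gen_form} applied with $t=u\in\cali$, expressible purely in terms of $R(\alpha)$, the matrices $R^v(\alpha)$ for $v\in\cali$, and the matrices $R^v_{s,u}(\alpha)$ for $v\in\cali$; all of these belong to the families named in the statement. A routine induction on $r$ then transmits this dependence to $V^{(r)}(s)$, and summing $s$ over $\cali$ yields the claimed representation for $\ee(\alpha^{T^{(r)}})$.

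The main obstacle I anticipate is the careful justification of the strong Markov decomposition together with the distributional identity $T^{(r)}-T^{(r-1)}\stackrel{d}{=}T^{(1)}(u)$ on the event $\{W_{T^{(r-1)}-d+1}\cdots W_{T^{(r-1)}}=u\}$. One must verify that this identifying event is $\mathcal{F}_{T^{(r-1)}}$-measurable and that the shifted sequence is independent of $\mathcal{F}_{T^{(r-1)}}$ with the same i.i.d.\ law, so that its first hitting time of $\cali$, when prepended by the context $u$, faithfully realises the definition of $T^{(1)}(u)$ through $\tau(u,\cdot)$. These are standard consequences of the i.i.d.\ structure and the definition \eqref{T_I_def}, but worth spelling out to make the recursive restart rigorous. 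Once this identification is in place, the remainder of the argument is pure bookkeeping of which determinants appear at each level of the induction.
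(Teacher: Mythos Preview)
Your proposal is correct and follows essentially the same approach as the paper: induction on $r$ via the strong Markov property, reducing everything to the one-step formulas \eqref{alpha_T_gen_form} and \eqref{alpha_T_u_gen_form}. The only cosmetic difference is that the paper tracks $\ee(\alpha^{T^{(r)}(s)})$ as a function of the starting context $s\in\cali$ and peels off the \emph{first} hitting time, whereas you track the ending state via $V^{(r)}(s)$ and peel off the \emph{last} one; the two recursions are dual and equally valid.
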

	\begin{proof}
		Suppose $r\geq 2$. We use the definition \eqref{T_I_def} to write   
		\beq
		\ee(\alpha^{T^{(r)}(s)})  &=& \sum_{u\in \cali} \ee\left(\alpha^{T^{(r)}(s)}\one{T^{(1)}(s)=\tau(s,u)}\right)  \notag \\
		&=& \sum_{u\in \cali} \ee\left(\alpha^{\tau(s, u)+T^{(r-1)}(u)}\one{T^{(1)}(s)=\tau(s,u)}\right)  \notag \\
		&=& \sum_{u\in \cali}\ee\left(\alpha^{T^{(1)}(s)} \one{T^{(1)}(s) = \tau(s,u))} \right) \ee\left(\alpha^{T^{(r-1)}(u)}\right).
		\feq
		An inductive argument, based on this recursive relation, along with \eqref{alpha_T_gen_form} and \eqref{alpha_T_u_gen_form} for $T^{(r-1)}(s)$, provides us with the proof. However, we will not provide an explicit description of them since they are not necessary for our purposes.
	\end{proof}
	
	We now have the necessary tools to prove Theorem~\ref{words_thm}. To this end, we will briefly discuss the correlation matrix of a pattern and how it can aid in computing the generating function of $g_r^v(\alp{n})$, denoted by $\wmf_{r,\malp}^v$:
	\beq
	&&\wmf_{r,\malp}^v(x) = \sum_{n=d}^\infty
	g_r^v(\alp{n}) x^n, \quad \malp \in \nn, \ r\in \nn_0.
	\feq
	Take $\malp \geq d$ fixed, and consider any pattern $v\in \cals_d$. For any subset $\cala \subseteq \alp d$, let $v_\cala$ be the instance of the pattern $v$ formed by using only the letters in $\cala$. For example, $231_{\{1,3,9\}}=391$. Define $\cali_v(\malp)$ to be the set of all instances of $v$ in the alphabet $[\malp]$, that is,
	\beqn \label{I_kv_def}
	\cali_v(\malp):=\{ v_\cala \ | \cala \in [\malp]_d \}.
	\feqn
	For instance, if $v=132$ and $k=4$, then 
	\beq
	\cali_v(\malp) = \{132,142,143,243\}.
	\feq
	We create a correlation matrix, denoted by $R(\alpha, v, \malp)$, for $\cali_v(k)$, which is composed of $\alpha$-correlation values among the words in $\cali_v(\malp)$. For any pair $(u,s)\in \cali_v(\malp) \times \cali_v(\malp)$, the entry of the matrix corresponding to $(u,s)$, denoted by $[R(\alpha, v, \malp)]_{u, s}$, is defined as $(u*s)_{\alpha, \malp}$.
	
	Recall the definition of $\cali_v(\malp)$ from \eqref{I_kv_def}. To simplify the notation, we replace the superscript and subscript $u$, $s$, $t$ in the correlation matrices by the corresponding $d$-sets $\cala$, $\calb$, and so on. Thus, when we use $\cala$ as the subscript or superscript, it should be replaced by $v_\cala$. For a fixed pattern $v\in \cals_d$ and alphabet size $\malp$, the matrix $R(\alpha,v,\malp)$ is a $\binom{\malp}{d}\times \binom{\malp}{d}$ matrix, and for any pair $(\cala,\calb)\in [\malp]_d \times [\malp]_d$, the $(\cala,\calb)$-entry of $R(\alpha,v,\malp)$, denoted by $[R(\alpha,v,\malp)]_{\cala, \calb}$, can be expressed as follows:
	\beq
	[R(\alpha,v,\malp)]_{\cala, \calb} &=& (v_\cala*v_\calb)_{\alpha, k} = \frac{1}{\alpha^d} \sum_{i=1}^d \malp^i \alpha^{d-i} \one{v_\cala[d-i+1, d]
		= v_\calb[1, i]} \notag \\
	&=& \one{\cala=\calb}\left(\frac{\malp}{\alpha}\right)^d + \sum_{i\in \calo_v} \left(\frac{\malp}{\alpha} \right)^i \one{v_\cala[d-i+1, d]
		= v_\calb[1, i]}.
	\feq
	
	For example, if $v\in \cals_d$ is a non-overlapping pattern, then for $\malp\geq d$, the $(\cala,\calb)$-entry of the correlation matrix $R(\alpha, v, \malp)$ is given by \eqref{R_nonover}:
	\beqn \label{R_nonover}
	[R(\alpha, v, \malp)]_{\cala,\calb} = \begin{cases}
		\left(\frac{\malp}{\alpha}\right)^d  & \text{if } 	\cala=\calb \\
		\frac{\malp}{\alpha} & \text{if }  (v_\cala)_d = (v_\calb)_1 \\
		0 & \text{otherwise}
	\end{cases}.
	\feqn
	Note that for $(\cala,\calb)$ to satisfy the condition $(v_\cala)_d = (v_\calb)_1$, the $v_d$-th smallest element of the set $\cala$ must be the $v_1$-th smallest element of the set $\calb$.
	
	Recall equation \eqref{T_I_def}. We substitute $\cali$ with $\cali_v(\malp)$ in \eqref{T_I_def} and denote the resulting functions and values by $T_v^{(1)}(.)$ and $T_v^{(1)}$, respectively. Using this notation, we can state that the equation $\red(W_{n-d+1}\cdots W_n) = v$ is equivalent to $W_{n-d+1}\cdots W_n \in \cali_v(\malp)$. Moreover, for any $r\in \nn_0,$ we have
	\beq
	\cup_{j=0}^r \{ \con_v(W_1\cdots W_n)=j \} = \cup_{j=0}^r \{ W_1\cdots W_n \in G_j^v(\alp n) \} = \{ T^{(r+1)}_v > n \}.    
	\feq
	
	Let $f_r^v(\alp n)$ denote the number of words in $\alp n$ containing at most $r$ order-isomorphic copies of $v$. We can express this number as the sum of $g_j^v(\alp n)$. Hence, we will write
	\beq
	f_r^v(\alp n) = \sum_{j=0}^r g_j^v(\alp n) = \malp^n \sum_{j=0}^r \pp(W_1\cdots W_n \in G_j^v(\alp n)) = \malp^n
	\pp(T^{(r+1)}_v>n).
	\feq
	
	Lastly, we present the 
	
	\begin{proof}[Proof of Theorem \ref{words_thm}] 
		Let us begin by recalling that it is well known that if $\malp \geq d$, then $T_{v}^{(r)}<\infty$ almost surely and hence $\ee(\alpha^{T_{v}^{(r)}})<\infty$. This is due to the fact that $(W_{n-d+1}\cdots W_n)_{n\geq d}$ is an irreducible finite Markov chain on $\alp d$, which means that the hitting time of any nonempty subset of $\alp d$ has an exponential tail and is almost surely finite.
		
		Now, let us inspect the generating functions of the random variables $T_{r}(v,k)$. To that end, observe that Fubini's theorem can be utilized to write
		
		\beq
		\ee(\alpha^{T_{v}^{(r)}}) &=& \int_0^1 \pp(\alpha^{T_{v}^{(r)}}\geq x) dx = \sum_{\ell=0}^\infty
		\int_{\alpha^{\ell+1}}^{\alpha^{\ell}} \pp(\alpha^{T_{v}^{(r)}}\geq x) dx \\
		&=& \sum_{\ell=0}^\infty \pp(\alpha^{T_{v}^{(r)}}\geq \alpha^{\ell+1}) \int_{\alpha^{\ell+1}}^{\alpha^{\ell}} dx =
		\sum_{\ell=0}^\infty \pp(\alpha^{T_{v}^{(r)}}\geq \alpha^{\ell+1}) (\alpha^\ell-\alpha^{\ell+1}) \\
		&=& \sum_{\ell=0}^\infty \pp(T_{v}^{(r)}\leq \ell+1) (\alpha^\ell-\alpha^{\ell+1}) = \sum_{\ell=0}^\infty \left( 1-
		\pp(T_{v}^{(r)} > \ell+1)\right) (\alpha^\ell-\alpha^{\ell+1}) \\
		&=& 1 - \frac{1-\alpha}{\alpha} \sum_{\ell=1}^\infty \pp(T_{v}^{(r)}> \ell)\alpha^{\ell} = 1 - \frac{1-\alpha}{\alpha}
		\sum_{\ell=1}^\infty f_{r-1}^v(\alp \ell) \left(\frac{\alpha}{\malp}\right)^\ell.
		\feq 
		Let us continue by noting that for $\malp\geq d$, we can express
		\beqn \label{words_alpha_tau_v}
		\wmf_{r,k}^v\left(\frac{\alpha}{\malp}\right) & =& \sum_{\ell=1}^\infty f_{r}^v(\alp \ell)
		\left(\frac{\alpha}{\malp}\right)^\ell- \sum_{\ell=1}^\infty f_{r-1}^v(\alp \ell) \left(\frac{\alpha}{\malp}\right)^\ell \notag
		\\
		&=& \frac{\alpha}{1-\alpha} \left(\ee(\alpha^{T_{v}^{(r)}}) - \ee(\alpha^{T_{v}^{(r+1)}})\right).
		\feqn 
		To complete the argument, we observe that Equation \eqref{khor-cond} implies that $R(\alpha,w,k)=R(\alpha, v,k)$ holds true, and this same fact also applies to all the modified matrices described in the paragraph following Equation \eqref{R_def}, for both the patterns $v$ and $w$. Therefore, by equation \eqref{words_alpha_tau_v} and Lemma \ref{lemma_Ti}, we have
		\beq
		\wmf_{r,\malp}^v\left(\frac{\alpha}{\malp}\right)  &=& \frac{\alpha}{1-\alpha} \left(\ee(\alpha^{T_{v}^{(r)}}) - \ee(\alpha^{T_{v}^{(r+1)}})\right) \\
		&=& \frac{\alpha}{1-\alpha} \left(\ee(\alpha^{T_{w}^{(r)}}) - \ee(\alpha^{T_{w}^{(r+1)}})\right) = \wmf_{r,\malp}^w\left(\frac{\alpha}{\malp}\right),
		\feq
		which completes the proof.
	\end{proof}
	
	\subsection{From words to permutations}
	
	This section is devoted to proving that for any two patterns $v$ and $w$ in $\cals_d$, \eqref{khor-cond} implies that $v$ and $w$ are strongly c-Wilf-equivalent in permutations. To accomplish this, we rely on a lemma that allows us to extend the result of Theorem \ref{words_thm} to permutations. This lemma was previously presented in \cite{reza1} within the context of classical pattern containment. However, we include it here for completeness, even though the proof is identical.
	
	\begin{lemma} \label{i^nSn} 
		Let $v$ be any pattern. Then, for $r\in \nn_0,$
		\beqn \label{f_perm_n}
		&&g_r^v(\cals_n) =  \sum_{\malp =1}^{n} (-1)^{n-\malp } \binom{n}{\malp } g_r^v(\alp n).
		\feqn
	\end{lemma}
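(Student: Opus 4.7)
The plan is a standard image-set classification followed by binomial inversion. For each $i\in[n]$, let $N_r^v(i)$ denote the number of length-$n$ sequences over $[i]$ that are surjective onto $[i]$ and contain exactly $r$ consecutive occurrences of $v$; extend by $N_r^v(i):=0$ for $i>n$. The key initial observation is that $N_r^v(n)=g_r^v(\cals_n)$: a surjective map $[n]\to[n]$ uses every letter exactly once and is therefore a permutation of $[n]$, and since $\con_v(w)$ is defined purely in terms of the relative order and equality pattern of the entries of $w$, the count over surjective sequences $[n]\to[n]$ coincides with the count over $\cals_n$.

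Next, for every $\malp\in\nn$ and every subset $\calb\subseteq[\malp]$ of cardinality $i$, the unique order-preserving bijection $\calb\to[i]$ preserves equalities and order of entries, and hence preserves $\con_v$. Consequently, the number of words in $\alp n$ whose image is exactly $\calb$ and which contain $v$ consecutively $r$ times equals $N_r^v(i)$, independent of which size-$i$ subset $\calb$ is chosen. Partitioning the words of $\alp n$ by their image sets yields the identity
\beq
g_r^v(\alp n) \;=\; \sum_{i=1}^{n} \binom{\malp}{i}\, N_r^v(i), \qquad \malp\in\nn,
\feq
with the convention $\binom{\malp}{i}=0$ for $i>\malp$.

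The identity above is the standard binomial transform. Viewing $(g_r^v(\alp n))_{\malp=1}^n$ and $(N_r^v(i))_{i=1}^n$ as vectors in $\zz^n$, the lower-triangular matrix $\bigl(\binom{\malp}{i}\bigr)_{1\leq \malp, i\leq n}$ is invertible over $\zz$ with inverse entries $(-1)^{\malp-i}\binom{\malp}{i}$. Inverting and reading off the coordinate $i=n$ gives
\beq
g_r^v(\cals_n) \;=\; N_r^v(n) \;=\; \sum_{\malp=1}^{n} (-1)^{n-\malp}\binom{n}{\malp}\, g_r^v(\alp n),
\feq
which is precisely \eqref{f_perm_n}. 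I anticipate no serious obstacle; the only point deserving explicit mention is the invariance of $\con_v$ under an order-preserving relabelling of the alphabet, which is immediate from the definition of a consecutive occurrence in terms of relative order and equalities of letters.
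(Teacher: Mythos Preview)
Your proof is correct and follows essentially the same approach as the paper: both arguments classify words in $\alp n$ by their image set, use the invariance of $\con_v$ under order-preserving relabellings to reduce to a single count depending only on the image size, and then invert. The only cosmetic difference is that the paper writes out inclusion--exclusion directly on $\caly_n(\cala):=\cala^n\setminus\bigcup_{e\in\cala}(\cala\setminus\{e\})^n$, whereas you package the same computation as a binomial inversion of the lower-triangular Pascal matrix.
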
 	
	\begin{proof} [Proof of Theorem \ref{i^nSn}-(a)]
		We define $\caly_n(\cala)$ as the set of all words of length $n$ whose distinct set of letters is exactly $\cala$. Formally,
		\beq
		&&\caly_n(\cala) := \cala^n \setminus \left(\cup_{e\in \cala} \cala_e^n\right)\quad \mbox{where}\quad
		\cala_e:=\cala\setminus\{e\}.
		\feq	
		Recall equation \eqref{gfr} and express it using the notation introduced above as follows:
		\beqn  \label{LnA}
		\calg_r^v(\caly_n(\cala)) = \calg_r^v(\cala^n) \setminus \left(\cup_{e\in \cala} \calg_r^v(\cala_e^n)\right).
		\feqn
		Furthermore, for any non-empty subset $\calj\subset \cala$, we have
		
		\beq
		\cup_{e\in \calj} \calg_r^v(\cala_e^n) = \calg_r^v(\cup_{e\in \calj} \cala_e^n), \quad \mbox{and} \quad \cap_{e\in \calj}
		\calg_r^v(\cala_e^n) = \cap_{e\in \calj} \calg_r^v(\cala_e^n).
		\feq
		Using the inclusion-exclusion principle on equation \eqref{LnA}, we get:
		\beq
		&& g_r^v(\caly_n(\cala)) = g_r^v(\cala^n) - \sum_{\calj\subset \cala} (-1)^{n-\#\calj}g_r^v(\cap_{e\in \calj} \cala_e^n
		) \\
		&& = \sum_{\calj \subset \cala} (-1)^{\#\calj}g_r^v((\cala\setminus \calj)^n ).
		\feq
		Since $g_0^v(\cala^n) = g_0^v(\calb^n)$ for any $\cala,\calb\subset [n]$ where $\cala$ and $\calb$ have the same size, we have
		\beq
		g_r^v(\caly_n([a])) = g_r^v([a]^n) + \sum_{\malp =1}^{a-1} (-1)^{\malp } \binom{a}{\malp } g_r^v([a-\malp ]^n),
		\feq
		for each $a\in \nn$. Finally, note that $\caly_n([n]) = \cals_n$. This completes the proof of \eqref{f_perm_n}.
	\end{proof}
	
	An immediate consequence of this lemma is the following corollary.
	
	\begin{corollary} \label{word-perm-cor}
		(strongly) c-Wilf-equivalence in words implies (strongly) c-Wilf-equivalence in permutations.
	\end{corollary}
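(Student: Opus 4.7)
The plan is to derive this corollary as an immediate consequence of Lemma \ref{i^nSn}, which already expresses $g_r^v(\cals_n)$ as a signed integer linear combination of the values $g_r^v(\alp n)$ for $\malp \in [n]$, with coefficients $(-1)^{n-\malp}\binom{n}{\malp}$ that depend only on $n$ and $\malp$, not on $v$. This is the whole point of that lemma: it converts word statistics into permutation statistics via inclusion-exclusion on the set of letters used.

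First I would spell out the strong version. Assume $v$ and $w$ are strongly c-Wilf-equivalent in words, so that $g_r^v(\alp n) = g_r^w(\alp n)$ for every $\malp, n \in \nn$ and every $r\in \nn_0$. Fixing $n$ and $r$ and applying the identity of Lemma \ref{i^nSn} term by term, I obtain
\[
g_r^v(\cals_n) = \sum_{\malp=1}^n (-1)^{n-\malp}\binom{n}{\malp} g_r^v(\alp n) = \sum_{\malp=1}^n (-1)^{n-\malp}\binom{n}{\malp} g_r^w(\alp n) = g_r^w(\cals_n),
\]
which is precisely the definition of strong c-Wilf-equivalence in permutations. The ordinary (non-strong) case is handled identically by restricting to $r=0$: the same linear combination identity, applied only at $r=0$, transfers the equalities $g_0^v(\alp n)=g_0^w(\alp n)$ to $g_0^v(\cals_n)=g_0^w(\cals_n)$.

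Since the work is entirely carried by Lemma \ref{i^nSn}, there is no substantive obstacle here; the only thing to verify is that the hypotheses of the corollary are strong enough to feed every term on the right-hand side. In the strong version, they plainly are (the equality is assumed for all $\malp$ and all $r$), and in the ordinary version they are too (the equality is assumed for all $\malp$, with $r=0$ held fixed). I would therefore present the corollary with a one-line proof invoking Lemma \ref{i^nSn} and noting that both the strong and non-strong statements follow by the same substitution, depending on whether $r$ is allowed to vary or is fixed to zero.
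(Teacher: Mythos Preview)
Your proposal is correct and matches the paper's approach exactly: the paper states the corollary as ``an immediate consequence'' of Lemma~\ref{i^nSn} without further detail, and your one-line substitution argument is precisely the intended proof.
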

	
	The result follows from combining Theorem \ref{words_thm} with Corollary \ref{word-perm-cor}.
	
	\section*{Acknowledgement}
	
	I would like to express my gratitude to Sergi Elizalde for providing feedback on an initial draft, and to Ashwin Sah for bringing to my attention the proof of Elizalde's conjecture for non-overlapping patterns as presented in \cite{lee1}.


\begin{thebibliography}{100}
		
		
		
		\bibitem{Bbook}
		M.~B\'{o}na,
		\emph{Combinatorics of Permutations},
		Chapman \& Hall/CRC, Boca Raton, Second Edition, 2012.
		\filbreak
		
		\bibitem{bona3} 
		M.~B\'{o}na,
		\emph{Non-overlapping permutation patterns}, Pure Math. Appl. 22:2 (2011) 99–105.
		\filbreak
		
		\bibitem{dot1} V. Dotsenko, A. Khoroshkin, \emph{Shuﬄe algebras, homology, and consecutive pattern avoidance}, Algebraic Number Theory 7:3 (2013) 673--700.
		\filbreak
		
		\bibitem{dua1} 
		A. Duane, J. Remmel, \emph{Minimal overlapping patterns in colored permutations}, Electron. J. Combin. 18:2 (2011) \#25.
		\filbreak
		
		\bibitem{dwyer1}
		T. Dwyer, S. Elizalde
		\emph{Wilf equivalence relations for consecutive patterns},
		Adv. in Appl. Math. 99 (2018) 134--157.
		\filbreak
		
		\bibitem{kitaev1}
		R. Ehrenborg, S. Kitaev, P. Perry, \emph{A Spectral Approach to Consecutive Pattern-Avoiding Permutations}, J. of Combinatorics 2:3 (2011) 305--353.
		\filbreak
		
		
		\bibitem{eli1}
		S. Elizalde, \emph{Asymptotic enumeration of permutations avoiding generalized patterns}, Adv. in Appl. Math. 36 (2006)
		138--155.
		\filbreak
		
		
		\bibitem{eliz4} S. Elizalde, \emph{The most and the least avoided consecutive patterns}, Proc. Lond. Math. Soc. 106:
		5 (2013) 957--979.
		\filbreak
		
		\bibitem{eli2} 
		S. Elizalde, \emph{A survey of consecutive patterns in permutations}, in: Recent Trends in Combinatorics, in: IMA Vol. Math.
		Appl., 159 (2016) 601--618.
		\filbreak
		
		\bibitem{eliz8} 
		S. Elizalde, M. Noy, Consecutive patterns in permutations, Adv. in Appl. Math. 30:1–2 (2003) 110--125.
		\filbreak
		
		\bibitem{eliz9} 
		S. Elizalde, M. Noy, Clusters, generating functions and asymptotics for consecutive patterns in permutations, Adv. in
		Appl. Math. 49:3–5 (2012) 351--374.
		
		\bibitem{li2}
		H. Gerber, S. Li, \emph{The occurrence of sequence patterns in repeated experiments and hitting times in a Markov
			chain}, Stoch. Proc. and their App. 11 (1981) 101--108.
		\filbreak
		
		
		\bibitem{Kbook}
		S.~Kitaev,
		\emph{Patterns in Permutations and Words},
		Springer, 2011.
		\filbreak
		
		\bibitem{khor1}
		A. Khoroshkin, B. Shapiro, \emph{Using homological duality in consecutive pattern avoidance}, Electron. J. Combin. 18:2
		(2011) \#9.
		\filbreak
		
		
		\bibitem{lee1}
		M. Lee, A. Sah,
		\emph{Constraining strong c-Wilf equivalence using cluster poset asymptotics}, Adv. in Appl. Math.
		103 (2019) 43--57.
		\filbreak
		
		\bibitem{li1}
		S. Li, \emph{A martingale approach to the study of occurrence of sequence patterns in repeated experiments}, The
		Ann. of Prob. 8 (1980) 1171--1176.
		\filbreak
		
		\bibitem{reza1}
		T. Mansour, R. Rastegar, 
		\emph{Pattern occurrences in k-ary words revisited: A few new and old observations}, J. of Comb. Theory,
		Series A 188 (2022), \#105596.
		\filbreak
		
		\bibitem{reza2}
		T. Mansour, R. Rastegar, A. Roitershtein, \emph{Finite automata, probabilistic method, and occurrence enumeration of a pattern in words and permutations}, SIAM Journal on Discrete Mathematics  34:2  (2020) 1011--1038.
		\filbreak
		
		\bibitem{tw1}
		S. Meyn, R. L. Tweedie,
		\emph{Markov Chains and Stochastic Stability}, 
		Cambridge University Press, Second Edition, 2009.
		\filbreak
		
		
		\bibitem{nak1}
		B. Nakamura, Computational Approaches to Consecutive Pattern Avoidance in Permutations, Pure Math. Appl. (PU.M.A.) 22
		(2011) 253--268.
		\filbreak
		
		
		\bibitem{pera1}
		G. Perarnau, \emph{A probabilistic approach to consecutive pattern avoiding in permutations}, J. of Comb.
		Theory, Series A 120:5 (2013) 998--1011.
		
		\bibitem{poz1}
		V. Pozdnyakov, \emph{On Occurrence of Patterns in Markov Chains: Method of Gambling Teams}, Statistics \& Probability Letters
		78:16 (2008) 2762--2767.
		\filbreak
		
		
		\bibitem{steel1}
		V. Pozdnyakov, J. Steele, \emph{Martingale Methods for Patterns and Scan Statistics}, In Scan Statistics Methods and
		Applications (2009) 289--317.
		\filbreak
		
		\bibitem{silver1}
		R. A. Silverman, \emph{Introductory Complex Analysis}, Dover Publications, 1972. 
		\filbreak
		
		
		\bibitem{war1}
		R. Warlimont, Permutations avoiding consecutive patterns, Ann. Univ. Sci. Budapest. Sect. Comput. 22 (2003) 373--393.        
		\filbreak
		
		\bibitem{kr1}
		K. Zajkowski, \emph{A note on the gambling team method}, Statistics \& Probability Letters 85 (2014) 45--50.
		\filbreak
		
	\end{thebibliography}
\end{document}